\numberwithin{equation}{section}
\theoremstyle{thmstyleone}%
\newtheorem{theorem}{Theorem}
\theoremstyle{thmstyletwo}%
\newtheorem{remark}{Remark}%
\newtheorem{lemma}[theorem]{Lemma}%
\newtheorem{proposition}[theorem]{Proposition}
\theoremstyle{thmstylethree}%
\begin{document}

\title[Article Title]{Maximum principle for optimal control of infinite horizon stochastic difference equations driven by fractional noises}

\author[1]{\fnm{Yuecai} \sur{Han}}\email{hanyc@jlu.edu.cn}

\author*[1]{\fnm{Yuhang} \sur{Li}}\email{yuhangl22@mails.jlu.edu.cn}

\equalcont{These authors contributed equally to this work.}

\affil[1]{\orgdiv{School of Mathematics}, \orgname{Jilin University}, \orgaddress{ \city{Changchun}, \postcode{130012},  \state{Jilin Province}, \country{China}}}



\abstract{In this paper, infinite horizon stochastic difference equations and backward stochastic difference equations with fractional noises are studied. The main difficulty comes from fractional noises on infinite horizon. Motivated by discrete-time optimal control problem driven by fractional noises \cite{han2025maximum} and on infinite horizon \cite{ji2024infinite}, the stochastic maximum principle for discrete-time control problem driven by fractional noises in infinite horizon is proved. As an application, an optimal investment problem is solved. }

\keywords{Infinite horizon, backward stochastic difference equations, fractional noises, stochastic maximum principle}



\maketitle

\section{Introduction}
To solve stochastic optimal control problems, backward stochastic differential equations (BSDEs, in short) has been introduced since last century by Bismut \cite{bismut1973conjugate} for linear case and by Pardoux and Peng \cite{pardoux1990adapted} for general case. Then researchers made much progress on this area, some earlier works refer to \cite{peng1993backward, el1997backward}. In comparison, the backward stochastic difference equations (BS$\Delta$Es, in short) are studied to handle stochastic optimal control problem in discrete-time case. As mentioned in \cite{ji2024infinite}, there are two main types of BS$\Delta$Es, we concern the one that is driven by the increment of a martingale process. 
As we know, the works of this type of BS$\Delta$Es are fewer. 
Cheridito and Stadje \cite{cheridito2013bs} established existence results, comparison principles for solutions of BS$\Delta$Es and the convergence of solution of BS$\Delta$Es to solution of BSDEs.  Bielecki et al. \cite{bielecki2015dynamic} used BS$\Delta$Es in finance to present an arbitrage free theoretical framework. Recently, Ji and Zhang \cite{ji2024infinite} introduced BS$\Delta$Es on infinite horizon and proved the existence and uniqueness results.

Based on BSDEs and BS$\Delta$Es, stochastic maximum principle (SMP, in short) are studied for both continuous time systems and discrete-time systems to obtain the necessary conditions for optimal control. For SMP in continuous time case,  it has been investigated from 1970s \cite{kushner1972necessary,bismut1978introductory}. In 1990, Peng \cite{peng1990general} obtained the SMP for general control systems, where the control domain could be non-convex and control process could appear in the diffusion term. Then researchers made much progress in SMP for continuous control problems, some related works refer to \cite{zhou1998stochastic,han2010maximum,chen2010maximum,li2012stochastic}. For SMP in discrete-time case, it was first given by Lin and Zhang \cite{lin2014maximum}. Then it was investigated in many areas, for example, stochastic games \cite{wu2022maximum}, mean-field case \cite{dong2022maximum}, delayed systems \cite{dong2023maximum}, forward-backward systems \cite{ji2022maximum}, infinite horizon case \cite{ji2024infinite}, systems driven by fractional noises \cite{han2025maximum}.

However, all above works only consider the case that the systems are driven by white noises. To better describe the real-world, the development of SMP for control systems driven by fractional noises are also developed. For continuous time case, Hu and Zhou \cite{hu2005stochastic} and Duncan and Pasik-Duncan \cite{duncan2013linear} considered the linear quadratic control problem driven by fractional noises and obtained the feedback control. Han et al. \cite{han2013maximum} studied the SMP for nonlinear control systems driven by fractional noises. But as we know, only Han and Li \cite{han2025maximum} investigated SMP for control systems driven by fractional noises in discrete-time case.

In this paper, motivated by discrete-time control problem driven by fractional noises \cite{han2025maximum} and on infinite horizon \cite{ji2024infinite}, we focus on following infinite horizon discrete-time control problem with fractional noises.
The state equation is 

\begin{align}\label{1.1}
\left\{\begin{array}{ll}
X_{n+1}=X_n+b(n,X_n,u_n)+\sigma(n,X_n,u_n)\xi_n^H,\,
\\X_0=x,
\end{array}\right.
\end{align}
to minimize the cost function 
\begin{align*}
J(u)=Y_0,
\end{align*}
where $Y$ is given by the following BS$\Delta$Es
\begin{align*}
\left\{\begin{array}{ll}
e^{-\lambda n^\gamma}\left(Y_n+Z_n\eta_n\right)=e^{-\lambda (n+1)^\gamma}\left(Y_{n+1}+f(n+1,X_{n+1},Y_{n+1},Z_{n+1},u_{n+1})\right),
\\\\\lim_{N\to+\infty}\mathbb{E}[e^{-\lambda N^{\gamma}}\|Y_N\|^{2b\delta_1^{-1}...\delta_{N}^{-1}}]=0,
\end{array}\right.
\end{align*}
for some give $\lambda>0,\, b,\gamma>1,\,p_i\in(0,1),\, i\ge 0.$ The significant differences with recursive control problem studied in \cite{ji2024infinite} is that our discount value is $e^{-\lambda n^\gamma}$ rather than $e^{-\lambda n}$ and the terminal condition is more complex. This is because it is unable to show $\mathbb{E}|Y_n|^2\le O(e^{-\lambda n})$ for any $\lambda >0$ and the existence and  uniqueness result of solutions of S$\Delta$Es and BS$\Delta$Es needs stronger conditions in fractional noises case.

We first show the solvability of the S$\Delta$Es (\ref{1.1}), there are two main difficulties and innovations in the proof:
\begin{itemize}
\item As we well know, $X_t=e^{\sigma B_t}$ and $X_t^H=e^{\sigma B_t^H}$ could be solutions to linear stochastic differential equations driven by standard Brownian motion and fractional Brownian motion, respectively. It is easy to show 
\begin{align*}
\mathbb{E}\|X_t\|^p=O(e^{C_pt}),\quad \forall t\ge 0,\, p\ge1,
\end{align*}
for some $C_p$ only depend on $p$. So it is natural to choose norms as
\begin{align*}
\|X_\cdot\|^2_\lambda=\mathbb{E}\int_0^{+\infty}e^{-\lambda t}\|X_t\|^2dt,
\end{align*}
 for continuous case and 
\begin{align*}
\|X_\cdot\|^2_\lambda=\mathbb{E}\sum_{n=0}^{+\infty}e^{-\lambda n}\|X_n\|^2,
\end{align*}
for discrete-time case on infinite horizon. In the same way, the norm for $X_t^H$ should be
\begin{align*}
\|X^H_\cdot\|^2_\lambda=\mathbb{E}\int_0^{+\infty}e^{-\lambda t^{2H}}\|X_t^H\|^2dt,\quad H>\frac{1}{2},
\end{align*}
in continuous case. Surprisingly, we find the following norm is well posedness for S$\Delta$Es (\ref{1.1}) for all $H\in(0,1)$ in discrete-time case:
\begin{align}\label{1.2}
\|X_\cdot\|_{\lambda,\gamma}^2=\sum_{n=0}^{+\infty} e^{-\lambda n^\gamma}\|X_n\|^2,
\end{align}
where $\lambda>0$, $\gamma>1$ do not dependent on $H$ and could be arbitrary closed to $0$ and $1$, respectively. We will show the existence and uniqueness of the solution of S$\Delta$Es (\ref{1.1}) under the norm (\ref{1.2}).

\item The second difficulty caused by the self-dependence of the fractional noises, which leads to 
\begin{align*}
\mathbb{E}[X_n\xi_n^H]\neq0,\quad \mathbb{E}[X_n\xi_n^2]\neq\mathbb{E}[X_n].
\end{align*}
In finite horizon case, Han and Li \cite{han2025maximum} assume $\mathbb{E}\|X_0\|^{2a}<+\infty$ for $a>1$, which could be arbitrary closed to $1$, and prove 
\begin{align*}
\mathbb{E}\|X_n\|^{2a\delta^{n}}<+\infty,\quad n=0,1,...,N,
\end{align*}
step by step for some $\delta\in(0,1)$ related to $a$. However, this method could not be applied in infinite horizon case, because $2a\delta^n\to0$, as $n\to+\infty$. To handle this, we construct a vector $\vec{\delta}=(\delta_1,\delta_2,...)$, such that $\delta_i\in(0,1),\, i\ge1$, and
\begin{align*}
 |\vec{\delta}|:=\lim_{n\to +\infty}\Pi_{i=1}^n\delta_i>\frac{1}{a}.
\end{align*}
Then, through more complex calculus, we show 
\begin{align*}
\mathbb{E}\sum_{n=0}^{+\infty}e^{-\lambda n^\gamma}\|X_n\|^{2a\delta_1...\delta_n}<+\infty,
\end{align*}
which implies
\begin{align*}
\mathbb{E}\sum_{n=0}^{+\infty}e^{-\lambda n^\gamma}\|X_n\|^{2}<+\infty.
\end{align*}
\end{itemize}

To obtain the SMP, we introduce $(p,q)$ as adjoint BS$\Delta$Es as (\ref{BSDEpq}). To show the solvability of the solution of adjoint equation, we prove the existence and uniqueness results for the following BS$\Delta$Es:
\begin{align}
\left\{\begin{array}{ll}
e^{-\lambda n^\gamma}\left(Y_n+Z_n\eta_n\right)=e^{-\lambda(n+1)^\gamma}\left[Y_{n+1}+f(n+1,Y_{n+1},Z_{n+1})\right]\\\\
\qquad\qquad\qquad\qquad\qquad+e^{-\lambda(n+1)^\gamma}g(n+1,Y_{n+1},Z_{n+1})\mathbb{E}^{\mathcal{F}_{n+1}}[\xi_{n+1}^H],
\\\\\lim_{N\to+\infty}\mathbb{E}[e^{-\lambda N^{\gamma}}\|Y_N\|^{2b\delta_1^{-1}...\delta_{N}^{-1}}]=0.
\end{array}\right.
\end{align}
The proof is combining with that of S$\Delta$Es (\ref{1.1}) and section 2 of \cite{ji2024infinite}, in comparison, we show
\begin{align*}
\mathbb{E}\sum_{n=0}^{+\infty}e^{-\lambda n^\gamma}\|Y_n\|^{2b\delta_1^{-1}...\delta_n^{-1}}<+\infty,
\end{align*}
for some $b\ge 1$.

Based on the well-posedness of the solution of S$\Delta$Es (\ref{1.1}) and BS$\Delta$Es, we obtain the SMP for infinite discrete-time control problem with fractional noises. Moreover, an optimal investment problem is solved.

The rest of this paper is organized as follows. In section 2, we introduce the S$\Delta$Es and BS$\Delta$Es
with fractional noise, and prove the existence and the
uniqueness of the solutions of the S$\Delta$Es and BS$\Delta$Es. In section 3, we obtain the SMP for infinite discrete-time control problem driven by  fractional noises and prove the related verification theorem. In section 4,  an optimal investment problem is solved to illustrate the main results.

\section{ Infinite horizon S$\Delta$Es and BS$\Delta$Es}
Let the filtration $\mathbb{F}=(\mathcal{F}_n)_{0\le n\le N}$ defined by $\mathcal{F}_n=\mathcal{F}_0\lor\sigma(\xi^H_0,\xi_1^H,...,\xi_{n-1}^H)$. 
Let $(\Omega,\mathcal{F},\{\mathcal{F}_n\}_{n\in\mathbb
{Z}^+},P)$ be a filtered probability space, $\mathcal{F}_0\subset \mathcal{F}$ be a sub $\sigma$-algebra. $\{\xi_n^H\}_{n\in\mathbb
{Z}^+}$ is a sequence of fractional noises described by the increment of a $m$-dimensional fractional Brownian motion such that  $\xi_n^H=B^H(n+1)-B^H(n)$. 

Denote by $L^\beta(\mathcal{F}_n;\mathbb{R}^d)$, or $L^\beta(\mathcal{F}_n)$ for simplify,  the set of all $\mathcal{F}_n$-measurable  random variables $X$ taking values in $\mathbb{R}^d$, such that $\mathbb{E}\|X\|^\beta<+\infty$. Denote by $L^{\beta,\vec{\delta},\lambda,\gamma}_\mathcal{F}(0,+\infty;\mathbb{R}^d)$, or $L^{\beta,\vec{\delta},\lambda,\gamma}_\mathcal{F}(0,+\infty)$ for simplify, the set of all $\mathcal{F}_n$-adapted process $X=(X_n)_{n\in\mathbb
{Z}^+}$ such that 
\begin{align}\label{norm}
\|X_\cdot\|_{\beta,\vec{\delta},\lambda,\gamma}=\sum_{n=0}^{+\infty}e^{-\lambda n^\gamma}\mathbb{E}\left(\|X_n\|^{\beta \delta_1...\delta_n}\right)<+\infty,
\end{align}
for $\beta,\lambda>1$, where $\vec{\delta}=(\delta_1,...,\delta_n,...)$ satisfies $p_n\in(0,1), n\ge 1$, and $$|\vec{\delta}|:=\lim_{n\to +\infty}\Pi_{i=1}^n\delta_i>0.$$
In this paper, we choose $\delta_n=1-(n+2)^{-\theta}$, $\theta>1$, $n\ge 1$, and denote the corresponding vector as $\mathbf{p}^\theta$. Then we give a lower bound of $|\vec{\delta}^\theta|$.
\begin{proposition}
Let $\delta_n=1-(n+2)^{-\theta}$, $\theta>1$. Then $|\vec{\delta}^\theta|\ge exp\left(\frac{2^{1-\theta}}{1-\theta}+\frac{2^{1-2\theta}}{1-2\theta}\right)$.
\end{proposition}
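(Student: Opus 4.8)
The plan is to take logarithms, convert the infinite product into a series, bound each summand from below by an elementary inequality, and then dominate the two resulting series by integrals.

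First I would write
\begin{align*}
\log|\vec{\delta}^\theta|=\sum_{i=1}^{\infty}\log\bigl(1-(i+2)^{-\theta}\bigr),
\end{align*}
so that a lower bound on $|\vec{\delta}^\theta|$ follows from a lower bound on this series. The key elementary step is the inequality $\log(1-x)\ge -x-x^{2}$, valid whenever $0\le x\le \tfrac12$. This can be checked directly from the Taylor expansion $\log(1-x)=-\sum_{k\ge1}x^{k}/k$, since $\sum_{k\ge2}x^{k}/k\le \tfrac12\sum_{k\ge2}x^{k}=\tfrac{x^{2}}{2(1-x)}\le x^{2}$ exactly when $x\le\tfrac12$. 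Here $x_{i}=(i+2)^{-\theta}$, and because $\theta>1$ and $i\ge1$ we have $x_{i}\le 3^{-\theta}\le\tfrac13<\tfrac12$, so the inequality applies termwise. This yields
\begin{align*}
\log|\vec{\delta}^\theta|\ge -\sum_{i=1}^{\infty}(i+2)^{-\theta}-\sum_{i=1}^{\infty}(i+2)^{-2\theta}.
\end{align*}

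Next I would control the two series by integral comparison. Reindexing with $j=i+2$, each series becomes $\sum_{j=3}^{\infty}j^{-\alpha}$ with $\alpha=\theta$ or $2\theta$; since $x\mapsto x^{-\alpha}$ is decreasing, the standard bound gives $\sum_{j=3}^{\infty}j^{-\alpha}\le\int_{2}^{\infty}x^{-\alpha}\,dx$. For $\alpha>1$ this integral equals $-\tfrac{2^{1-\alpha}}{1-\alpha}$, so that
\begin{align*}
-\sum_{i=1}^{\infty}(i+2)^{-\theta}\ge\frac{2^{1-\theta}}{1-\theta},\qquad
-\sum_{i=1}^{\infty}(i+2)^{-2\theta}\ge\frac{2^{1-2\theta}}{1-2\theta}.
\end{align*}
Combining the two displays and exponentiating produces precisely $|\vec{\delta}^\theta|\ge\exp\bigl(\tfrac{2^{1-\theta}}{1-\theta}+\tfrac{2^{1-2\theta}}{1-2\theta}\bigr)$.

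The step I expect to require the most care is matching the constants exactly: the lower limit of integration must be $2$ (not $3$) for the integral comparison $\sum_{j=3}^{\infty}\le\int_{2}^{\infty}$, and it is this choice of lower limit that makes the bounds come out as $2^{1-\theta}$ and $2^{1-2\theta}$ rather than with a different base. Beyond that, the argument is a routine combination of the concavity-type bound on $\log(1-x)$ with the integral test; the hypothesis $\theta>1$ is used twice, once to guarantee $x_{i}\le\tfrac12$ so the logarithmic inequality is legitimate, and once to guarantee convergence of both $p$-series (indeed $2\theta>\theta>1$).
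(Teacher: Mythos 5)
Your proof is correct, and it follows the same overall route as the paper's: take logarithms, bound each $\log\delta_i$ from below by a quadratic expression, and dominate the two resulting $p$-series by integral comparison with lower limit $2$. The noteworthy difference is the elementary inequality at the core. You use $\log(1-x)\ge -x-x^{2}$ for $0\le x\le\tfrac12$, correctly justified via $\sum_{k\ge2}x^{k}/k\le\tfrac{x^{2}}{2(1-x)}\le x^{2}$, and you correctly verify applicability since $(i+2)^{-\theta}\le 3^{-\theta}<\tfrac12$ for $\theta>1$. The paper instead invokes $\ln(1+x)\ge x-\tfrac{x^{2}}{2}$ for $x\in(-1,0)$, which is actually false in that range: writing $x=-t$ with $t\in(0,1)$, one has $\ln(1-t)=-t-\tfrac{t^{2}}{2}-\tfrac{t^{3}}{3}-\cdots<-t-\tfrac{t^{2}}{2}$, so the paper's intermediate bound $\ln S_N\ge-\sum_{i=1}^{N}(i+2)^{-\theta}-\tfrac12\sum_{i=1}^{N}(i+2)^{-2\theta}$ does not follow as written. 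The paper then also silently drops the factor $\tfrac12$ when passing from $-\tfrac12\int_1^{N+1}(x+1)^{-2\theta}\,dx$ to its final line, and these two slips cancel, so its final constant coincides with yours. In other words, your choice of constant $1$ rather than $\tfrac12$ in front of $x^{2}$ is exactly what is needed to reach the stated bound $\exp\bigl(\tfrac{2^{1-\theta}}{1-\theta}+\tfrac{2^{1-2\theta}}{1-2\theta}\bigr)$ legitimately; your integral step $\sum_{j=3}^{\infty}j^{-\alpha}\le\int_{2}^{\infty}x^{-\alpha}\,dx$ is the same as the paper's $\int_{1}^{N+1}(x+1)^{-\alpha}\,dx$ after substitution. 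Your proposal is thus a correct, and in fact corrected, version of the paper's argument.
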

\begin{proof}
Define $S_N=\Pi_{i=1}^N\delta_i$.
Notice that $ln(1+x)\ge x-\frac{x^2}{2}, x\in(-1,0)$, so we have that
\begin{align*}
lnS_N=&\sum_{i=1}^Nln\delta_i\\
\ge&-\sum_{i=1}^N(i+2)^{-\theta}-\frac{1}{2}\sum_{i=1}^N(i+2)^{-2\theta}\\
\ge& -\int_1^{N+1}(x+1)^{-\theta}dx-\frac{1}{2}\int_1^{N+1}(x+1)^{-2\theta}dx\\
=&\frac{-1}{1-\theta}[(N+2)^{1-\theta}-2^{1-\theta}]-\frac{1}{1-2\theta}[(N+2)^{1-2\theta}-2^{1-2\theta}].
\end{align*}
Then 
\begin{align*}
|\vec{\delta}^\theta|=exp\left(\lim_{N\to+\infty}lnS_N\right)\ge exp\left(\frac{2^{1-\theta}}{1-\theta}+\frac{2^{1-2\theta}}{1-2\theta}\right).
\end{align*}
\end{proof}

\begin{remark}
The above proposition implies that $|\vec{\delta}^\theta|\to1^-$, as $\theta\to+\infty$.
\end{remark}

\begin{remark}
It is easy to check that if $\beta|\vec{\delta}|\ge2$, then equation (\ref{norm}) implies 
\begin{align*}
\|X_\cdot\|^2_{\lambda,\gamma}:=\sum_{n=0}^{+\infty}e^{-\lambda n^\gamma}\mathbb{E}\left(\|X_n\|^2\right)<+\infty.
\end{align*}
We denote $L_\mathcal{F}^{2,\lambda,\gamma}(0,+\infty)$ by the set of all adapted process such that $\|X_\cdot\|_{\lambda,\gamma}<+\infty$.
\end{remark}
Define $\mathcal{T}=\{0,1,...\}$. Then we consider the following S$\Delta$Es,
\begin{align}\label{SDE}
\left\{\begin{array}{ll}
X_{n+1}=X_n+b(n,X_n)+\sigma(n,X_n)\xi_n^H,\, n\ge0,
\\X_0=x.
\end{array}\right.
\end{align}
Here $x\in L^{2a}(\mathcal{F}_0)$ is independent with $\{\xi_n\}$, 
$b(n,x)$ and $\sigma(n,x) $ are measurable functions on $\mathcal{T}\times \mathbb{R}^d$ with values in $\mathbb{R}^d$ and $\mathbb{R}^{d\times m}$, respectively. We give the following assumptions:
\begin{enumerate}
	\item[(H2.1)]  $b$ and $\sigma$ are adapted processes: $b(\cdot,x), \sigma(\cdot,x)\in L^{2a,\mathbf{p}^\theta,\lambda,\gamma}_{\mathcal{F}}(0,+\infty)$,\, $\forall x\in\mathbb{R}^d$, for some $\theta, \lambda>1$, such that $|\vec{\delta}^\theta|\ge a^{-1}$.

\item[(H2.2)] There exists some constants $L>0$, such that 
\begin{align*}
\|x_1+b(n,x_1)-x_2-b(n,x_2)\|&+\|\sigma(n,x_1)-\sigma(n,x_2)\|\le L\|x_1-x_2\|,\\
&\forall n\in\mathcal{T},\quad x_1,x_2\in\mathbb{R}^d.
\end{align*}
\end{enumerate}

Then we show the existence and uniqueness result.

\begin{theorem}\label{theSDE}
Assume  that (H2.1), (H2.2) hold, then S$\Delta$Es (\ref{SDE}) has a unique solution in $L_\mathcal{F}^{2a,\vec{\delta}^\theta,\lambda,\gamma}(0,+\infty)$.
\end{theorem}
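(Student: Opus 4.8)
The plan is to exploit the fact that the state equation (\ref{SDE}) is an \emph{explicit} forward recursion: given $X_n$, the value $X_{n+1}=X_n+b(n,X_n)+\sigma(n,X_n)\xi_n^H$ is determined, and by induction each $X_n$ is $\mathcal{F}_n$-measurable. Hence there is exactly one adapted process solving (\ref{SDE}), and any two solutions lying in $L_\mathcal{F}^{2a,\vec{\delta}^\theta,\lambda,\gamma}(0,+\infty)$ must agree term by term. So the entire content of the theorem is to verify that this canonical process has finite norm (\ref{norm}); that is, with $m_n:=2a\prod_{i=1}^n\delta_i$ (so that $m_0=2a$ and $m_n\downarrow 2a|\vec{\delta}^\theta|\ge 2$ by (H2.1)), that $\sum_{n\ge0}e^{-\lambda n^\gamma}\,\mathbb{E}\|X_n\|^{m_n}<+\infty$. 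Writing $a_n:=\mathbb{E}\|X_n\|^{m_n}$, a preliminary induction using (H2.1), (H2.2) and the finiteness of all Gaussian moments of $\xi_n^H$ first shows each $a_n<+\infty$.

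Next I would derive a one-step estimate for $a_{n+1}$. Setting $\tilde b(n,x)=x+b(n,x)$, assumption (H2.2) gives $\|\tilde b(n,X_n)\|\le\|\tilde b(n,0)\|+L\|X_n\|$ and $\|\sigma(n,X_n)\|\le\|\sigma(n,0)\|+L\|X_n\|$, so that $\|X_{n+1}\|\le\|\tilde b(n,0)\|+L\|X_n\|+(\|\sigma(n,0)\|+L\|X_n\|)\|\xi_n^H\|$. Raising to the power $m_{n+1}=\delta_{n+1}m_n$ and taking expectations, the crucial point is to decouple $X_n$ (which is $\mathcal{F}_n$-measurable) from $\xi_n^H$ (which, by the self-dependence of the fractional noise, is correlated with the past). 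I would do this with Hölder's inequality using the conjugate exponents $1/\delta_{n+1}$ and $1/(1-\delta_{n+1})$: for the cross term this yields $\mathbb{E}[\|X_n\|^{m_{n+1}}\|\xi_n^H\|^{m_{n+1}}]\le(\mathbb{E}\|X_n\|^{m_n})^{\delta_{n+1}}(\mathbb{E}\|\xi_n^H\|^{m_{n+1}/(1-\delta_{n+1})})^{1-\delta_{n+1}}$, the whole construction being designed precisely so that the exponent on $\|X_n\|$ drops back to the previous level $m_n$. Since $1-\delta_{n+1}=(n+3)^{-\theta}$ and $\xi_n^H$ is centered Gaussian with variance $1$, the moment of order $r_n=m_{n+1}(n+3)^\theta$ satisfies $\mathbb{E}\|\xi_n^H\|^{r_n}\le(Cr_n)^{r_n/2}$, whence $(\mathbb{E}\|\xi_n^H\|^{r_n})^{1-\delta_{n+1}}\le C(n+3)^{\theta a}$ grows only \emph{polynomially} in $n$. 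Treating the linear term $L\|X_n\|$ by Jensen ($\mathbb{E}\|X_n\|^{m_{n+1}}\le a_n^{\delta_{n+1}}$) and the $b,\sigma$ base terms in the same way, I obtain $a_{n+1}\le D_n\big(a_n^{\delta_{n+1}}+\text{inhomogeneous}\big)$ with $D_n$ of polynomial growth.

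Next I would insert the discount. Multiplying the one-step bound by $e^{-\lambda(n+1)^\gamma}$ and writing $u_n:=e^{-\lambda n^\gamma}a_n$, I would linearize $a_n^{\delta_{n+1}}$ by the elementary inequality $t^{\delta_{n+1}}\le\delta_{n+1}t+(1-\delta_{n+1})$, so that the recursive part contributes the factor $e^{-\lambda[(n+1)^\gamma-n^\gamma]}D_n\delta_{n+1}$ in front of $u_n$. Here the choice $\gamma>1$ is decisive: by the mean value theorem $(n+1)^\gamma-n^\gamma\ge\gamma n^{\gamma-1}\to+\infty$, so $e^{-\lambda[(n+1)^\gamma-n^\gamma]}$ decays faster than any polynomial and already beats $D_n$, forcing the contraction factor $\rho_n\to0$. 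Thus $u_{n+1}\le I_n+\rho_n u_n$ with $\rho_n\le 1/2$ for $n\ge N_0$, where the inhomogeneous part $I_n$, coming from the base terms $b(\cdot,0),\sigma(\cdot,0)$ through (H2.1) together with the constant $(1-\delta_{n+1})$ produced by the linearization and in each case with the polynomial $D_n$ absorbed into the discount gap, is summable. Summing from $N_0$ and absorbing $\tfrac12\sum u_n$ into the left-hand side then yields $\sum_n u_n<+\infty$, i.e. $X\in L_\mathcal{F}^{2a,\vec{\delta}^\theta,\lambda,\gamma}(0,+\infty)$.

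The hard part will be the second step: making the Hölder decoupling of the self-dependent fractional noise quantitative, and in particular checking that after raising to the power $1-\delta_{n+1}$ the growing Gaussian moments $\mathbb{E}\|\xi_n^H\|^{r_n}$ contribute only polynomially. This is exactly where the specific law $\delta_n=1-(n+2)^{-\theta}$, and the lower bound $|\vec{\delta}^\theta|\ge a^{-1}$ guaranteeing $m_n\ge2$, are used. Everything then hinges on the super-exponential gap $e^{-\lambda[(n+1)^\gamma-n^\gamma]}$ dominating that polynomial, which is impossible for $\gamma=1$ and explains the exponent $n^\gamma$ appearing in the weight (\ref{norm}).
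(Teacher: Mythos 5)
Your proposal is correct and follows essentially the same route as the paper's proof: the same H\"older decoupling with conjugate exponents $1/\delta_{n+1}$ and $1/(1-\delta_{n+1})$ that drops the exponent on $\|X_n\|$ back to $2a\delta_1\cdots\delta_n$, the same Stirling-type bound on the Gaussian moments $\mathbb{E}|\xi_n^H|^{m}=O\bigl((m/e)^{m/2}\bigr)$ yielding only polynomial growth in $n$ (your exponent $\theta a$ versus the paper's stated $\theta/2$ is immaterial, and yours is in fact the more carefully computed one), and the same absorption of that polynomial by the gap $e^{-\lambda[(n+1)^\gamma-n^\gamma]}\le e^{-\lambda\gamma n^{\gamma-1}}$ via a threshold $N_0$ past which the recursion contracts with factor $\tfrac12$, followed by summation. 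The only departure is uniqueness, which you settle by the cleaner observation that the recursion is explicit so the adapted solution is pathwise determined, whereas the paper re-runs the contraction estimate on the difference of two solutions and lets $N_1\to+\infty$ — the conclusion is the same.
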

\begin{proof}
We first show $\{X_n\}_{n\in\mathcal{T}}$ in $L_{\mathcal{F}}^{2a,\vec{\delta}^\theta,\lambda,\gamma}(0,+\infty)$. For $X_1$, through Jensen inequality  and Hölder inequality, we have that
\begin{align*}
\mathbb{E}\|X_1\|^{2a\delta_1}\le& 2^{2a\delta_1}\left(\mathbb{E}\|x+b(0,x)\|^{2a\delta_1}+\mathbb{E}\|\sigma(0,x)\xi_0^H\|^{2a\delta_1}\right)\\
\le&(2L)^{2a\delta_1}\left(\mathbb{E}\|b(0,0)+x\|^{2a\delta_1}+\left(\mathbb{E}\|\sigma(0,0)+x\|^{2a}\right)^{\delta_1}\times\left(\mathbb{E}|\xi_0^H|^{2a\delta_1(1-\delta_1)^{-1}}\right)^{1-\delta_1}\right)\\
\le& (4L)^{2a\delta_1}\left(\left(\mathbb{E}\|b(0,0)\|^{2a}\right)^{\delta_1}+\left(\mathbb{E}\|\sigma(0,0)\|^{2a}\right)^{\delta_1}+\left(\mathbb{E}\|x\|^{2a}\right)^{\delta_1}\right)\\
&\qquad\times\left(1+\left(\mathbb{E}|\xi_0^H|^{2a\delta_1(1-\delta_1)^{-1}}\right)^{1-\delta_1}\right).
\end{align*}
By Stirling formula, $$\mathbb{E}|\xi_0^H|^m=\frac{2^{m/2}}{\pi^{1/2}}\Gamma(\frac{m+1}{2})\approx\frac{2^{m/2}}{\pi^{1/2}}\sqrt{\frac{4\pi}{m+1}}\left(\frac{m+1}{2e}\right)^{\frac{m+1}{2}} =O\left((m/e)^{m/2}\right),\, m\to+\infty,$$
which means $\left(\mathbb{E}|\xi_0^H|^m\right)^{1/m}=O(m^{1/2}), m\to+\infty$. Denote $C_s=\sup_{m\ge1}\frac{\left(\mathbb{E}|\xi_0^H|^m\right)^{1/m}}{m^{1/2}}$, then we have
\begin{align*}
\mathbb{E}\|X_1\|^{2a\delta_1}\le&(4L)^{2a\delta_1}\left(1+C_s^{2a\delta_1}\sqrt{2a\delta_1(1-\delta_1)^{-1}}\right)\\
&\qquad\times\left(\left(\mathbb{E}\|b(0,0)\|^{2a}\right)^{\delta_1}+\left(\mathbb{E}\|\sigma(0,0)\|^{2a}\right)^{\delta_1}+\left(\mathbb{E}\|x\|^{2a}\right)^{\delta_1}\right).
\end{align*}
In the same way, we derive
\begin{align*}
&\mathbb{E}\|X_{n+1}\|^{2a\delta_1...\delta_{n+1}}\\
\le&(4L)^{2a\delta_1...\delta_{n+1}}\left(1+C_s^{2a\delta_1...\delta_{n+1}}\sqrt{2a\delta_1...\delta_{n+1}(1-\delta_{n+1})^{-1}}\right)\left(\mathbb{E}\|X_n\|^{2a\delta_1...\delta_n}\right)^{\delta_{n+1}}\\
&\qquad\times\left(\left(\mathbb{E}\|b(n,0)\|^{2a\delta_1...\delta_n}\right)^{\delta_{n+1}}+\left(\mathbb{E}\|\sigma(n,0)\|^{2a\delta_1...\delta_n}\right)^{\delta_{n+1}}+\left(\mathbb{E}\|X_n\|^{2a\delta_1...\delta_n}\right)^{\delta_{n+1}}\right)\\
\le&\left(C_1+C_2(n+3)^{\frac{\theta}{2}}\right)\\
&\qquad\times \left(\left(\mathbb{E}\|b(n,0)\|^{2a\delta_1...\delta_n}\right)^{\delta_{n+1}}+\left(\mathbb{E}\|\sigma(n,0)\|^{2a\delta_1...\delta_n}\right)^{\delta_{n+1}}+\left(\mathbb{E}\|X_n\|^{2a\delta_1...\delta_n}\right)^{\delta_{n+1}}\right)\\
\le&C_3(n+3)^{\frac{\theta}{2}}\left(\left(\mathbb{E}\|b(n,0)\|^{2a\delta_1...\delta_n}\right)^{\delta_{n+1}}+\left(\mathbb{E}\|\sigma(n,0)\|^{2a\delta_1...\delta_n}\right)^{\delta_{n+1}}+\left(\mathbb{E}\|X_n\|^{2a\delta_1...\delta_n}\right)^{\delta_{n+1}}\right)\\
\le&C_3(n+3)^{\frac{\theta}{2}}\mathbb{E}\|X_n\|^{2a\delta_1...\delta_n}+C_3(n+3)^{\frac{\theta}{2}}\left(3+\mathbb{E}\|b(n,0)\|^{2a\delta_1...\delta_n}+\mathbb{E}\|\sigma(n,0)\|^{2a\delta_1...\delta_n}\right)
\end{align*}
where $C_1=\max\{(4L)^{2a},(4L)^{2a|\mathbf{p}|}\}$,
\begin{align*}
C_2=\sqrt{2a}\max\{(4L)^{2a},(4L)^{2a|\mathbf{p}|}\}\times\max\{C_s^{2a},C_s^{2a|\mathbf{p}|}\}
\end{align*}
and $C_3=2\max\{C_1,C_2\}$.
Then
\begin{align}\label{2.3}
&e^{-\lambda (n+1)^\gamma}\mathbb{E}\|X_{n+1}\|^{2a\delta_1...\delta_{n+1}}\notag\\
\le& C_3e^{-\lambda\gamma n^{\gamma-1}}(n+3)^{\frac{\theta}{2}}e^{-\lambda n^{\gamma}}\mathbb{E}\|X_n\|^{2a\delta_1...\delta_n}\notag\\
&+C_3e^{-\lambda\gamma n^{\gamma-1}}(n+3)^{\frac{\theta}{2}}e^{-\lambda n^{\gamma}}\left(3+\mathbb{E}\|b(n,0)\|^{2a\delta_1...\delta_n}+\mathbb{E}\|\sigma(n,0)\|^{2a\delta_1...\delta_n}\right).
\end{align}
Choose a $N_0\ge1$, such that $C_3e^{-\lambda\gamma n^{\gamma-1}}(n+3)^{\frac{\theta}{2}}\le\frac{1}{2}$, as $n\ge N_0$. It is easy to check
\begin{align}\label{2.4}
\sum_{n=0}^{N_0}e^{-\lambda n^\gamma}\mathbb{E}\left(\|X_n\|^{2a\delta_1...\delta_n}\right)<+\infty.
\end{align}
Then, through equation (\ref{2.3}), we obtain
\begin{align*}
&\sum_{n=N_0+1}^{+\infty}e^{-\lambda n^\gamma}\mathbb{E}\left(\|X_n\|^{2a\delta_1...\delta_n}\right)\notag\\
\le& \frac{1}{2}\sum_{n=N_0+1}^{+\infty}e^{-\lambda n^\gamma}\mathbb{E}\left(\|X_n\|^{2a\delta_1...\delta_n}\right) +\frac{1}{2}e^{-\lambda N_0^\gamma}\mathbb{E}\left(\|X_{N_0}\|^{2a\delta_1...\delta_{N_0}}\right)\notag\\
&+\frac{1}{2}\sum_{n=N_0}^{+\infty}e^{-\lambda n^{\gamma}}\left(3+\mathbb{E}\|b(n,0)\|^{2a\delta_1...\delta_n}+\mathbb{E}\|\sigma(n,0)\|^{2a\delta_1...\delta_n}\right),
\end{align*}
i.e.,
\begin{align}\label{2.5}
&\sum_{n=N_0+1}^{+\infty}e^{-\lambda n^\gamma}\mathbb{E}\left(\|X_n\|^{2a\delta_1...\delta_n}\right)\notag\\
\le& e^{-\lambda N_0^\gamma}\mathbb{E}\left(\|X_{N_0}\|^{2a\delta_1...\delta_{N_0}}\right)\notag\\
&+\sum_{n=N_0}^{+\infty}e^{-\lambda n^{\gamma}}\left(3+\mathbb{E}\|b(n,0)\|^{2a\delta_1...\delta_n}+\mathbb{E}\|\sigma(n,0)\|^{2a\delta_1...\delta_n}\right)\notag\\
<&+\infty.
\end{align}
Equations (\ref{2.4}) and (\ref{2.5}) show $\{X_n\}_{n\in\mathcal{T}}\in L_{\mathcal{F}}^{2a,\vec{\delta}^\theta,\lambda,\gamma}(0,+\infty)$.

Then we prove uniqueness. Let $\{X_n\}_{n\in\mathcal{T}}$ and $\{\Tilde{X}_n\}_{n\in\mathcal{T}}$ be two solutions of S$\Delta$Es (\ref{SDE}). Similar to the proof of existence, we have
\begin{align*}
&e^{-\lambda (n+1)^\gamma}\mathbb{E}\|\Tilde{X}_{n+1}-X_{n+1}\|^{2a\delta_1...\delta_{n+1}}\notag\\
\le& C_3e^{-\lambda\gamma n^{\gamma-1}}(n+3)^{\frac{\theta}{2}}e^{-\lambda n^{\gamma}}\left(\mathbb{E}\|\tilde{X}_n-X_n\|^{2a\delta_1...\delta_n}\right)^{\delta_{n+1}}.
\end{align*}
Since $\mathbb{E}\|\tilde{X}_0-X_0\|^{2a}=0$, we have
\begin{align*}
\sum_{n=0}^{N_1}e^{-\lambda n^\gamma}\mathbb{E}\left(\|\tilde{X}_n-X_n\|^{2a\delta_1...\delta_n}\right)=0,
\end{align*}
for a fixed $N_1\ge N_0$. For $n>N_1$,
\begin{align*}
&\sum_{n=N_0+1}^{+\infty}e^{-\lambda n^\gamma}\mathbb{E}\left(\|X_n\|^{2a\delta_1...\delta_n}\right)\notag\\
\le& \frac{1}{2}\sum_{n=N_1+1}^{+\infty}e^{-\lambda n^\gamma}\mathbb{E}\left(\|X_n\|^{2a\delta_1...\delta_n}\right) +\frac{1}{2}e^{-\lambda N_1^\gamma}\mathbb{E}\left(\|X_{N_1}\|^{2a\delta_1...\delta_{N_1}}\right)+\frac{1}{2}\sum_{n=N_1}^{+\infty}e^{-\lambda n^{\gamma}},
\end{align*}
i.e.,
\begin{align*}
\sum_{n=N_1+1}^{+\infty}e^{-\lambda n^\gamma}\mathbb{E}\left(\|X_n\|^{2a\delta_1...\delta_n}\right)\le\sum_{n=N_1}^{+\infty}e^{-\lambda n^{\gamma}}.
\end{align*}
Let $N_1\to +\infty$, it follows  $\|X_\cdot\|_{2a,\vec{\delta}^\theta,\lambda,\gamma}\to0$. So that S$\Delta$Es (\ref{SDE}) has a unique solution in $L_\mathcal{F}^{2a,\vec{\delta}^\theta,\lambda,\gamma}(0,+\infty)$.
\end{proof}

Denote by $\hat{L}^{\beta,\vec{\delta},\lambda,\gamma}_\mathcal{F}(0,+\infty;\mathbb{R}^d)$, or $\hat{L}^{\beta,\vec{\delta},\lambda,\gamma}_\mathcal{F}(0,+\infty)$ for simplify, the set of all $\mathcal{F}_n$-adapted process $Y=(Y_n)_{n\in\mathbb
{Z}^+}$ such that 
\begin{align}\label{norm}
\|Y_\cdot\|_{\hat{L}^{\beta,\vec{\delta},\lambda,\gamma}_\mathcal{F}}=\sum_{n=0}^{+\infty}e^{-\lambda n^\gamma}\mathbb{E}\left(\|Y_n\|^{\beta \delta_1^{-1}...\delta_n^{-1}}\right)<+\infty.
\end{align}

Then we introduce the following BS$\Delta$Es:
\begin{align}\label{BSDE}
\left\{\begin{array}{ll}
e^{-\lambda n^\gamma}\left(Y_n+Z_n\eta_n\right)=e^{-\lambda(n+1)^\gamma}\left[Y_{n+1}+f(n+1,Y_{n+1},Z_{n+1})\right]\\\\
\qquad\qquad\qquad\qquad\qquad+e^{-\lambda(n+1)^\gamma}g(n+1,Y_{n+1},Z_{n+1})\mathbb{E}^{\mathcal{F}_{n+1}}[\xi_{n+1}^H],
\\\\\lim_{N\to+\infty}\mathbb{E}[e^{-\lambda N^{\gamma}}\|Y_N\|^{2b\delta_1^{-1}...\delta_{N}^{-1}}]=0.
\end{array}\right.
\end{align}
We now give the following assumptions:

\begin{enumerate}
	\item[(H2.3)]  $f$ and $g$ are adapted processes: $f(\cdot,y,z), \sigma(\cdot,y,z)\in \hat{L}^{2b,\vec{\delta}^\theta,\lambda,\gamma}_{\mathcal{F}}(0,+\infty)$,\, $\forall y,z\in\mathbb{R}^d$, for some $\theta,\gamma>1,\,\lambda>0$.

\item[(H2.4)] There exists some constants $L>0$, such that 
\begin{align*}
\|y_1+f(n,y_1,z_1)-y_2-f(n,y_2,z_2)\|&+\|g(n,y_1,z_1)-g(n,y_2,z_2)\|\\
\le L(\|y_1-y_2\|&+\|z_1-z_2\|),\quad
\forall n\in\mathcal{T},\quad y_1,y_2,z_1,z_2\in\mathbb{R}^d.
\end{align*}

\item[(H2.5)] There exists functions $f_1, g_1: \Omega\times\mathcal{T}\times \mathbb{R}^d\to\mathbb{R}^d$, such that 
\begin{align*}
\lim_{n\to+\infty}\mathbb{E}\left[e^{-\lambda n^{\gamma}}\left(\|f(n,y,z)-f_1(n,y)\|^{2b\delta_1^{-1}...\delta_{n}^{-1}}+\|g(n,y,z)-g_1(n,y)\|^{2b\delta_1^{-1}...\delta_{n}^{-1}}\right)\right]=0,
\end{align*}
and
\begin{align*}
\|f_1(n,y_1)-f_1(n,y_2)\|&+\|g_1(n,y_1)-g_1(n,y_2)\|\le L\|y_1-y_2\|,\quad n\in\mathcal{T},
\end{align*}
for all $y,y_1,y_2,z\in\mathbb{R}^d$.
\end{enumerate}

Then we show the solvability of the BS$\Delta$Es (\ref{BSDE}).

\begin{theorem}\label{BSDE e,u}
Assume that (H2.3) - (H2.5) hold, then BS$\Delta$Es (\ref{BSDE}) has a unique solution pair $\{(Y_n,Z_n)\}_{n\in\mathcal{T}}\in\hat{L}^{2b,\vec{\delta}^\theta,\lambda,\gamma}_{\mathcal{F}}(0,+\infty)\times\hat{L}^{2b,\vec{\delta}^\theta,\lambda,\gamma}_{\mathcal{F}}(0,+\infty)$.
\end{theorem}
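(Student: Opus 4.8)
The plan is to obtain the infinite-horizon pair $(Y,Z)$ as the limit of finite-horizon approximations, reusing the moment machinery developed for S$\Delta$Es (\ref{SDE}) in Theorem \ref{theSDE} together with the recursive construction of section~2 of \cite{ji2024infinite}.

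First I would construct, for each fixed horizon $N$, a finite-horizon solution: impose the terminal value $Y_N^{(N)}=0$ and solve (\ref{BSDE}) backward. Writing the right-hand side of (\ref{BSDE}) as an $\mathcal{F}_{n+1}$-measurable quantity $\Theta_{n+1}$ and taking $\mathbb{E}^{\mathcal{F}_n}$, the martingale-difference property $\mathbb{E}^{\mathcal{F}_n}[\eta_n]=0$ would give $Y_n^{(N)}=e^{\lambda n^\gamma}\mathbb{E}^{\mathcal{F}_n}[\Theta_{n+1}]$, while $Z_n^{(N)}$ is recovered from the representation of the increment $\Theta_{n+1}-\mathbb{E}^{\mathcal{F}_n}[\Theta_{n+1}]$ against $\eta_n$. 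Assumption (H2.4) makes each backward step a well-defined operation in $L^2$, producing a unique pair $(Y^{(N)},Z^{(N)})$ on $\{0,1,\dots,N\}$, which I extend by zero for $n>N$.

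The core of the argument is an a priori estimate uniform in $N$. Writing $m_n:=2b\,\delta_1^{-1}\cdots\delta_n^{-1}$, which increases to $2b/|\vec{\delta}^\theta|<+\infty$, I would start from $Y_n^{(N)}=e^{-\lambda[(n+1)^\gamma-n^\gamma]}\mathbb{E}^{\mathcal{F}_n}[Y_{n+1}^{(N)}+f+g\,\mathbb{E}^{\mathcal{F}_{n+1}}[\xi_{n+1}^H]]$, raise to the power $m_n$, apply conditional Jensen, and split the product $g\cdot\mathbb{E}^{\mathcal{F}_{n+1}}[\xi_{n+1}^H]$ by Hölder with exponents $\delta_{n+1}^{-1}$ and $(1-\delta_{n+1})^{-1}$. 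The identity $m_n/\delta_{n+1}=m_{n+1}$ then produces the factor $(\mathbb{E}\|Y_{n+1}^{(N)}\|^{m_{n+1}})^{\delta_{n+1}}$, and the moment bound $(\mathbb{E}|\xi_0^H|^m)^{1/m}=O(m^{1/2})$ from Theorem \ref{theSDE} produces the polynomial factor $(n+3)^{\theta/2}$. This is exactly the structure of (\ref{2.3}); since the discount gap $e^{-\lambda[(n+1)^\gamma-n^\gamma]}\le e^{-\lambda\gamma n^{\gamma-1}}$ dominates $(n+3)^{\theta/2}$, the coefficient drops below $\tfrac12$ for $n\ge N_0$, and summing as in (\ref{2.4})--(\ref{2.5}) with the free terms $f(n,0,0),g(n,0,0)$ controlled by (H2.3) bounds $\sum_n e^{-\lambda n^\gamma}\mathbb{E}\|Y_n^{(N)}\|^{m_n}$ uniformly in $N$; the matching bound for $Z^{(N)}$ follows from the equation. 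For convergence I would apply the same estimate to the difference $(Y^{(M)}-Y^{(N)},Z^{(M)}-Z^{(N)})$, which solves the homogeneous recursion, to show that $\{(Y^{(N)},Z^{(N)})\}$ is Cauchy in the product space $\hat{L}^{2b,\vec{\delta}^\theta,\lambda,\gamma}_{\mathcal{F}}(0,+\infty)\times\hat{L}^{2b,\vec{\delta}^\theta,\lambda,\gamma}_{\mathcal{F}}(0,+\infty)$; its limit $(Y,Z)$ solves (\ref{BSDE}) by the Lipschitz continuity (H2.4), and uniqueness is the same difference estimate applied to two infinite-horizon solutions.

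I expect the hard part to be verifying the asymptotic terminal condition $\lim_{N\to+\infty}\mathbb{E}[e^{-\lambda N^\gamma}\|Y_N\|^{m_N}]=0$ and ensuring the limit is a genuine solution rather than an artifact of the zero terminal data. This is exactly where (H2.5) is needed: the asymptotic replacement of $f,g$ by the $z$-independent $f_1,g_1$ controls the drift at infinity so that $e^{-\lambda n^\gamma}\mathbb{E}\|Y_n\|^{m_n}$ is summable and hence its general term tends to $0$. The real difficulty behind this is that the exponents $m_n$ increase while the moments of the fractional increments $\xi_{n+1}^H$ grow super-exponentially, so the contraction does not close unless the discount $e^{-\lambda n^\gamma}$ with $\gamma>1$ is exploited precisely; keeping every exponent inside the bounded window $[2b,\,2b/|\vec{\delta}^\theta|]$ throughout the Hölder splitting is the technical heart of the proof.
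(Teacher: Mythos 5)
Your overall strategy is the paper's: truncate to a finite horizon with zero terminal data, run weighted $m_n$-moment estimates with $m_n=2b\,\delta_1^{-1}\cdots\delta_n^{-1}$ via the Hölder split with exponents $\delta_{n+1}^{-1}$ and $(1-\delta_{n+1})^{-1}$, use $(\mathbb{E}|\xi_0^H|^m)^{1/m}=O(m^{1/2})$ to get the polynomial factor, beat it with the discount gap $e^{-\lambda\gamma n^{\gamma-1}}$ for $\gamma>1$, and pass to the limit by a Cauchy argument. But there is a concrete gap at the start of your construction: in your backward recursion the quantity $\Theta_N$ contains $f(N,Y_N^{(N)},Z_N^{(N)})$ and $g(N,Y_N^{(N)},Z_N^{(N)})$, yet $Z_N^{(N)}$ is never produced by the scheme --- the finite-horizon solution pair lives in $L^2_{\mathcal{F}}(0,N)\times L^2_{\mathcal{F}}(0,N-1)$ --- so the first backward step is not defined as you state it. The paper resolves this by solving the approximating system (\ref{BSDE1}) with modified drivers $\bar{f}^N,\bar{g}^N$ that agree with $f,g$ for $n\le N-1$ but are replaced at the terminal index $n=N$ by the $z$-independent $f_1(N,y)$, $g_1(N,y)$ of (H2.5). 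This is where (H2.5) structurally enters the existence proof (and again when letting $N\to+\infty$, since the modification at the single time $N$ must wash out); you instead attribute (H2.5) only to verifying the terminal condition, which misses its load-bearing role. Patching your scheme by decreeing $Z_N^{(N)}:=0$ is possible but then the horizon-comparison estimates still need (H2.5)-type control of the driver mismatch at time $M$, which your proposal does not set up.

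Relatedly, your claim that $(Y^{(M)}-Y^{(N)},Z^{(M)}-Z^{(N)})$ ``solves the homogeneous recursion'' is not correct: on $M\le n<N$ the difference equals $(Y^N,Z^N)$ itself (the $M$-horizon solution is extended by zero) and satisfies the full inhomogeneous equation, and at $n=M-1$ the two schemes use different drivers at time $M$ ($f$ versus $f_1$). The paper therefore splits $\|Y^{M,N}\|+\|Z^{M,N}\|$ into three ranges $I_1,I_2,I_3$: the tail $I_3$ is killed by the summability of the drivers at zero from (H2.3); a telescoping identity, inequality (\ref{2.14}), shows $e^{-\lambda M^\gamma}\bigl(\mathbb{E}\|Y^N_M\|^{m_M}+\mathbb{E}\|Z^N_M\|^{m_M}\bigr)\to 0$, which serves as the small ``terminal'' datum fed into the contraction on $[\tilde{N_0},M-1]$ to handle $I_2$; and the finitely many indices $n<\tilde{N_0}$, where the contraction coefficient exceeds $\tfrac12$, are absorbed by a fixed constant $C_1^{\tilde{N_0}}$ to handle $I_1$. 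Your moment machinery (the bookkeeping $m_n/\delta_{n+1}=m_{n+1}$, the $(n+3)^{\theta/2}$ factor, the choice of $\tilde{N_0}$, uniqueness by the same difference estimate) is exactly the paper's and would go through once these two structural points --- the $z$-free terminal driver and the three-region Cauchy argument --- are repaired.
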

\begin{proof}
\textbf{Existence.} For $N>0$, define
\begin{align}\label{BSDE1}
\left\{\begin{array}{ll}
Y_n^N+Z_n^N\eta_n=e^{-\lambda[(n+1)^\gamma-n^\gamma]}\left(Y_{n+1}^N+\Bar{f}^N(n+1,Y_{n+1}^N,Z_{n+1}^N)\right)\\\\
\qquad\qquad\qquad\quad+e^{-\lambda[(n+1)^\gamma-n^\gamma]}\Bar{g}^N(n+1,Y_{n+1}^N,Z_{n+1}^N)\mathbb{E}^{\mathcal{F}_{n+1}}[\xi_{n+1}^H],\, n\le N-1,
\\\\e^{-\frac{1}{2}\lambda N^{\gamma}}\|Y_N^N\|^{b\delta_1^{-1}...\delta_N^{-1}}=0.
\end{array}\right.
\end{align}
Here
\begin{align*}
\bar{f}^N(n,y,z)=\left\{\begin{array}{ll}
f(n,y,z),\quad n\le N-1,\\
f_1(N,y),\quad\quad n=N,
\end{array}\right.
\end{align*}
and
\begin{align*}
\bar{g}^N(n,y,z)=\left\{\begin{array}{ll}
g(n,y,z),\quad n\le N-1,\\
g_1(N,y),\quad\quad n=N.
\end{array}\right.
\end{align*}

Let $\tilde{Y}_n=e^{-\lambda n^{\gamma}}Y_n^N$ and $\tilde{Z}_n=e^{-\lambda n^{\gamma}}Z_n^N$.
Then we rewrite BS$\Delta$Es (\ref{BSDE1}) as
\begin{align}\label{BSDE2}
\left\{\begin{array}{ll}
\tilde{Y}_n+\tilde{Z}_n\eta_n=\tilde{Y}_{n+1}+\tilde{f}(n+1,\tilde{Y}_{n+1}^N,\tilde{Z}_{n+1}^N)+\tilde{g}(n+1,\tilde{Y}_{n+1}^N,\tilde{Z}_{n+1}^N)\mathbb{E}^{\mathcal{F}_{n+1}}[\xi_{n+1}^H]
\\\\\tilde{Y}_N=0,
\end{array}\right.
\end{align}
where
\begin{align*}
\tilde{f}(n,y,z)=e^{-\lambda n^{\gamma}}\bar{f}^N(n,e^{\lambda n^{\gamma}}y,e^{\lambda n^{\gamma}}z),\quad \tilde{g}(n,y,z)=e^{-\lambda n^{\gamma}}\bar{g}^N(n,e^{\lambda n^{\gamma}}y,e^{\lambda n^{\gamma}}z).
\end{align*}
According to (H2.3), (H2.4), it is easy to check $$\mathbb{E}\left[\|\tilde{f}(n,0,0)\|^{2b\delta_1^{-1}...\delta_n^{-1}}+\|\tilde{g}(n,0,0)\|^{2b\delta_1^{-1}...\delta_n^{-1}}\right]<+\infty,$$
and $\tilde{f},\tilde{g}$ are Lipschitz continuous w.r.t $(y,z)$. So that we have the following solvability result for BS$\Delta$Es (\ref{BSDE2}).
\begin{proposition}
Under the assumptions (H2.3)-(H2.5), BS$\Delta$Es (\ref{BSDE2}) has a unique solution pair in $L^2_\mathcal{F}(0,N)\times L^2_\mathcal{F}(0,N-1)$. Moreover, we have
\begin{align*}
\tilde{Y}_n=\mathbb{E}^{\mathcal{F}_n}\left[\tilde{Y}_{n+1}+\tilde{f}(n+1,\tilde{Y}_{n+1}^N,\tilde{Z}_{n+1}^N)+\tilde{g}(n+1,\tilde{Y}_{n+1}^N,\tilde{Z}_{n+1}^N)\xi_{n+1}^H\right],
\end{align*}
and
\begin{align*}
\tilde{Z}_n=\mathbb{E}^{\mathcal{F}_n}\left[\eta_n\cdot\left(\tilde{Y}_{n+1}+\tilde{f}(n+1,\tilde{Y}_{n+1}^N,\tilde{Z}_{n+1}^N)+\tilde{g}(n+1,\tilde{Y}_{n+1}^N,\tilde{Z}_{n+1}^N)\xi_{n+1}^H\right)\right].
\end{align*}
\end{proposition}
\begin{proof}
The proof similar to Theorem 3 of \cite{han2025maximum}, we omit it.
\end{proof}
Then we show $(Y^N,Z^N)$ is a Cauchy sequence. Let $(Y^N,Z^N)$, $(Y^M,Z^M)$ be solutions to BS$\Delta$Es (\ref{BSDE1}) with terminal time $N,M$, respectively, and $N>M\ge \tilde{N_0}$, where $\tilde{N_0}>0$ independent with $M,N,\gamma,\lambda$, which will be defined in the following proof. Define
\begin{align*}
Y^{M, N}=Y^N-Y^M= \begin{cases}0, & n \geq N, \\ Y_n^N, & M \leq n<N, \\ Y_n^N-Y_n^M, & n<M,\end{cases}
\end{align*}
and define $Z^{M,N}$ similarly.

For $n\in[M,N-1]$, recall that $\delta_{n}=1-(n+2)^{-\theta}$ and $q_n=1-p_n$, through Hölder inequality, we have 
\begin{align}\label{2.10}
&e^{-\lambda n^{\gamma}}\left(\mathbb{E}|Y_n^N\|^{2b\delta_1^{-1}...\delta_{n}^{-1}}\right)\notag\\=&\mathbb{E}\left(e^{-\lambda n^{\gamma}}\|Y_n^N\|\right)^{2b\delta_1^{-1}...\delta_{n}^{-1}}\times e^{-\lambda n^{\gamma}(1-2b\delta_1^{-1}...\delta_n^{-1})}\notag\\
\le&Ce^{-\lambda n^{\gamma}(1-2b\delta_1^{-1}...\delta_n^{-1})}\mathbb{E}\left[e^{-\lambda (n+1)^\gamma}\left(\|Y^N_{n+1}\|+\|Z_{n+1}^N\|+\|\bar{f}^N(n+1,0,0)\|\right)\right]^{2b\delta_1^{-1}...\delta_{n}^{-1}}\notag\\
&+Ce^{-\lambda n^{\gamma}(1-2b\delta_1^{-1}...\delta_n^{-1})}\notag\\
&\qquad\times\mathbb{E}\left[e^{-\lambda (n+1)^\gamma}\left(\|Y^N_{n+1}\|+\|Z_{n+1}^N\|+\|\bar{g}^N(n+1,0,0)\|\right)\cdot|\xi_{n+1}^H|\right]^{2b\delta_1^{-1}...\delta_{n}^{-1}}\notag\\
=&Ce^{-\lambda(2b\delta_1^{-1}...\delta_n^{-1}-1) \left((n+1)^\gamma-n^{\gamma}\right)}\notag\\
&\qquad\times e^{-\lambda (n+1)^\gamma}\mathbb{E}\left[\left(\|Y^N_{n+1}\|+\|Z_{n+1}^N\|+\|\bar{f}^N(n+1,0,0)\|\right)\right]^{2b\delta_1^{-1}...\delta_{n}^{-1}}\notag\\
&+Ce^{-\lambda(2b\delta_1^{-1}...\delta_n^{-1}-1) \left((n+1)^\gamma-n^{\gamma}\right)}\notag\\
&\qquad\times e^{-\lambda (n+1)^\gamma}\mathbb{E}\left[\left(\|Y^N_{n+1}\|+\|Z_{n+1}^N\|+\|\bar{g}^N(n+1,0,0)\|\right)\cdot|\xi_{n+1}^H|\right]^{2b\delta_1^{-1}...\delta_{n}^{-1}}\notag\\
\le& Ce^{-\lambda(2b\delta_1^{-1}...\delta_n^{-1}-1) \left((n+1)^\gamma-n^{\gamma}\right)}\cdot\left(1+C_s^{2b\delta_1^{-1}...\delta_{n}^{-1}}(1-\delta_{n+1})^{-\frac{1}{2}}\right)\notag\\
&\quad \times \Bigg\{e^{-\lambda (n+1)^\gamma}\left[\left(\mathbb{E}\|Y^N_{n+1}\|^{2b\delta_1^{-1}...\delta_{n+1}^{-1}}\right)^{\delta_{n+1}}+\left(\mathbb{E}\|Z^N_{n+1}\|^{2b\delta_1^{-1}...\delta_{n+1}^{-1}}\right)^{\delta_{n+1}}\right]\notag\\
&\qquad\quad+e^{-\lambda (n+1)^\gamma}\left(\mathbb{E}\|\bar{f}^N(n+1,0,0)\|^{2b\delta_1^{-1}...\delta_{n+1}^{-1}}\right)^{\delta_{n+1}}\notag\\
&\qquad\quad+e^{-\lambda (n+1)^\gamma}\left(\mathbb{E}\|\bar{g}^N(n+1,0,0)\|^{2b\delta_1^{-1}...\delta_{n+1}^{-1}}\right)^{\delta_{n+1}}\Bigg\}\notag\\
\le& Ce^{-\lambda(2b\delta_1^{-1}...\delta_n^{-1}-1) \left((n+1)^\gamma-n^{\gamma}\right)}\cdot (1-\delta_{n+1})^{-\frac{1}{2}}\notag\\
&\quad \times \Bigg\{e^{-\lambda (n+1)^\gamma}\left[\mathbb{E}\|Y^N_{n+1}\|^{2b\delta_1^{-1}...\delta_{n+1}^{-1}}+\mathbb{E}\|Z^N_{n+1}\|^{2b\delta_1^{-1}...\delta_{n+1}^{-1}}\right]\notag\\
&\qquad\quad+e^{-\lambda (n+1)^\gamma}\mathbb{E}\|\bar{f}^N(n+1,0,0)\|^{2b\delta_1^{-1}...\delta_{n+1}^{-1}}\notag\\
&\qquad\quad+e^{-\lambda (n+1)^\gamma}\mathbb{E}\|\bar{g}^N(n+1,0,0)\|^{2b\delta_1^{-1}...\delta_{n+1}^{-1}}+4e^{-\lambda (n+1)^\gamma}\Bigg\},
\end{align}
where $C$ is a constant independent with $N,M,n,\lambda,\gamma$, which may vary from line to line. Notice that 
\begin{align*}
\left(\mathbb{E}[\eta_n\xi_{n+1}^H]^m\right)^{\frac{1}{m}}\le C\left(\left(\mathbb{E}[\eta_n]^{2m}\right)^{\frac{1}{m}}+\left(\mathbb{E}[\xi_{n+1}^H]^{2m}\right)^{\frac{1}{m}}\right)\le Cm, \quad\forall m>1.
\end{align*}
Then similar to equation (\ref{2.10}), we derive
\begin{align}\label{2.11}
&e^{-\lambda n^{\gamma}}\left(\mathbb{E}|Z_n^N\|^{2b\delta_1^{-1}...\delta_{n}^{-1}}\right)\notag\\\le& Ce^{-\lambda(2b\delta_1^{-1}...\delta_n^{-1}-1) \left((n+1)^\gamma-n^{\gamma}\right)}\cdot \left((1-\delta_{n+1})^{-\frac{1}{2}}+(1-\delta_{n+1})^{-1}\right)\notag\\
&\quad \times \Bigg\{e^{-\lambda (n+1)^\gamma}\left[\mathbb{E}\|Y^N_{n+1}\|^{2b\delta_1^{-1}...\delta_{n+1}^{-1}}+\mathbb{E}\|Z^N_{n+1}\|^{2b\delta_1^{-1}...\delta_{n+1}^{-1}}\right]\notag\\
&\qquad\quad+e^{-\lambda (n+1)^\gamma}\mathbb{E}\|\bar{f}^N(n+1,0,0)\|^{2b\delta_1^{-1}...\delta_{n+1}^{-1}}\notag\\
&\qquad\quad+e^{-\lambda (n+1)^\gamma}\mathbb{E}\|\bar{g}^N(n+1,0,0)\|^{2b\delta_1^{-1}...\delta_{n+1}^{-1}}+4e^{-\lambda (n+1)^\gamma}\Bigg\}.
\end{align}
Combining equations (\ref{2.10}) and (\ref{2.11}), we conclude
\begin{align}\label{2.12}
&e^{-\lambda n^{\gamma}}\left[\mathbb{E}|Y_n^N\|^{2b\delta_1^{-1}...\delta_{n}^{-1}}+\mathbb{E}|Z_n^N\|^{2b\delta_1^{-1}...\delta_{n}^{-1}}\right]\notag\\
\le&Ce^{-\lambda(2b\delta_1^{-1}...\delta_n^{-1}-1) \left((n+1)^\gamma-n^{\gamma}\right)}\cdot \left(1+(1-\delta_n)^{-\frac{1}{2}}+(1-\delta_n)^{-1}\right)\notag\\
&\quad \times \Bigg\{e^{-\lambda (n+1)^\gamma}\left[\mathbb{E}\|Y^N_{n+1}\|^{2b\delta_1^{-1}...\delta_{n+1}^{-1}}+\mathbb{E}\|Z^N_{n+1}\|^{2b\delta_1^{-1}...\delta_{n+1}^{-1}}\right]\notag\\
&\qquad\quad+e^{-\lambda (n+1)^\gamma}\mathbb{E}\|\bar{f}^N(n+1,0,0)\|^{2b\delta_1^{-1}...\delta_{n+1}^{-1}}\notag\\
&\qquad\quad+e^{-\lambda (n+1)^\gamma}\mathbb{E}\|\bar{g}^N(n+1,0,0)\|^{2b\delta_1^{-1}...\delta_{n+1}^{-1}}+4e^{-\lambda (n+1)^\gamma}\Bigg\}\notag\\
\le&Ce^{-\lambda(2b-1) \left((n+1)^\gamma-n^{\gamma}\right)}\cdot (n+3)^\theta\notag\\
&\quad \times \Bigg\{e^{-\lambda (n+1)^\gamma}\left[\mathbb{E}\|Y^N_{n+1}\|^{2b\delta_1^{-1}...\delta_{n+1}^{-1}}+\mathbb{E}\|Z^N_{n+1}\|^{2b\delta_1^{-1}...\delta_{n+1}^{-1}}\right]\notag\\
&\qquad\quad+e^{-\lambda (n+1)^\gamma}\mathbb{E}\|\bar{f}^N(n+1,0,0)\|^{2b\delta_1^{-1}...\delta_{n+1}^{-1}}\notag\\
&\qquad\quad+e^{-\lambda (n+1)^\gamma}\mathbb{E}\|\bar{g}^N(n+1,0,0)\|^{2b\delta_1^{-1}...\delta_{n+1}^{-1}}+4e^{-\lambda (n+1)^\gamma}\Bigg\}.
\end{align}
Choose a $\tilde{N_0}>1,$ such that
\begin{align}\label{2.13}
Ce^{-\lambda(2b-1) \left((n+1)^\gamma-n^{\gamma}\right)}\cdot (n+3)^\theta\le \frac{1}{2},\quad \forall n>\tilde{N_0}.
\end{align}
Then it is easy to get
\begin{align}\label{2.14}
&e^{-\lambda M^{\gamma}}\left[\mathbb{E}\|Y_M^N\|^{2b\delta_1^{-1}...\delta_{M}^{-1}}+\mathbb{E}\|Z_M^N\|^{2b\delta_1^{-1}...\delta_{M}^{-1}}\right]\notag\\=&\sum_{n=M}^{N-1}e^{-\lambda n^{\gamma}}\left[\mathbb{E}\|Y_n^N\|^{2b\delta_1^{-1}...\delta_{n}^{-1}}+\mathbb{E}\|Z_n^N\|^{2b\delta_1^{-1}...\delta_{n}^{-1}}\right]\notag\\&\qquad-e^{-\lambda (n+1)^{\gamma}}\left[\mathbb{E}\|Y_{n+1}^N\|^{2b\delta_1^{-1}...\delta_{n+1}^{-1}}+\mathbb{E}\|Z_{n+1}^N\|^{2b\delta_1^{-1}...\delta_{n+1}^{-1}}\right]\notag\\
\le&\sum_{n=M}^{N-1}e^{-\lambda (n+1)^{\gamma}}\notag\\
&\qquad\times\left[\mathbb{E}\|\bar{f}^N(n+1,0,0)\|^{2b\delta_1^{-1}...\delta_{n+1}^{-1}}+\mathbb{E}\|\bar{g}^N(n+1,0,0)\|^{2b\delta_1^{-1}...\delta_{n+1}^{-1}}+4\right]\notag\\
\to &0,
\end{align}
as $M\to+\infty.$

Divide $\|Y^{M,N}_\cdot\|_{\hat{L}^{2b,\vec{\delta}^\theta,\lambda,\gamma}_\mathcal{F}}+\|Z^{M,N}_\cdot\|_{\hat{L}^{2b,\vec{\delta}^\theta,\lambda,\gamma}_\mathcal{F}}$ into three parts:
\begin{align*}
&\|Y^{M,N}_\cdot\|_{\hat{L}^{2b,\vec{\delta}^\theta,\lambda,\gamma}_\mathcal{F}}+\|Z^{M,N}_\cdot\|_{\hat{L}^{2b,\vec{\delta}^\theta,\lambda,\gamma}_\mathcal{F}}\\=&\sum_{n=0}^{N_0-1}e^{-\lambda n^\gamma}\mathbb{E}\left(\|Y^{M,N}_n\|^{2b \delta_1^{-1}...\delta_n^{-1}}+\|Z^{M,N}_n\|^{2b \delta_1^{-1}...\delta_n^{-1}}\right)\\
&+\sum_{n=N_0}^{M-1}e^{-\lambda n^\gamma}\mathbb{E}\left(\|Y^{M,N}_n\|^{2b \delta_1^{-1}...\delta_n^{-1}}+\|Z^{M,N}_n\|^{2b \delta_1^{-1}...\delta_n^{-1}}\right)\\
&+\sum_{n=M}^{N-1}e^{-\lambda n^\gamma}\mathbb{E}\left(\|Y^{M,N}_n\|^{2b \delta_1^{-1}...\delta_n^{-1}}+\|Z^{M,N}_n\|^{2b \delta_1^{-1}...\delta_n^{-1}}\right)\\
:=&I_1+I_2+I_3.
\end{align*}
For $I_3$, taking summation of inequality (\ref{2.12}) and through inequality (\ref{2.13}), we have
\begin{align}\label{2.15}
I_3\le&\sum_{n=M+1}^{N}e^{-\lambda n^{\gamma}}\left[\mathbb{E}\|\bar{f}^N(n,0,0)\|^{2b\delta_1^{-1}...\delta_{n}^{-1}}+\mathbb{E}\|\bar{g}^N(n,0,0)\|^{2b\delta_1^{-1}...\delta_{n}^{-1}}+4\right]\notag\\
\to &0.
\end{align}
For $n<M$, similar to the case $n\in[M,N-1]$, we have
\begin{align}\label{2.16}
&e^{-\lambda n^{\gamma}}\left[\mathbb{E}\|Y_n^{M,N}\|^{2b\delta_1^{-1}...\delta_{n}^{-1}}+\mathbb{E}\|Z_n^{M,N}\|^{2b\delta_1^{-1}...\delta_{n}^{-1}}\right]\notag\\
\le&Ce^{-\lambda(2b-1) \left((n+1)^\gamma-n^{\gamma}\right)}\cdot (n+3)^\theta\notag\\
&\quad \times \left\{e^{-\lambda (n+1)^\gamma}\left[\mathbb{E}\|Y^{M,N}_{n+1}\|^{2b\delta_1^{-1}...\delta_{n+1}^{-1}}+\mathbb{E}\|Z^{M,N}_{n+1}\|^{2b\delta_1^{-1}...\delta_{n+1}^{-1}}\right]\right\}^{\delta_{n+1}}.
\end{align}
According to inequality (\ref{2.14}), let $M$ is big enough, such that
\begin{align*}
e^{-\lambda M^\gamma}\left[\mathbb{E}\|Y^{M,N}_{M}\|^{2b\delta_1^{-1}...\delta_{M}^{-1}}+\mathbb{E}\|Z^{M,N}_{M}\|^{2b\delta_1^{-1}...\delta_{M}^{-1}}\right]\le 1.
\end{align*}
Then for $n\in[\tilde{N_0},M-1]$, through induction, we have
\begin{align*}
\sup_{n\in[\tilde{N_0},M-1]}e^{-\lambda n^\gamma}\left[\mathbb{E}\|Y^{M,N}_{n}\|^{2b\delta_1^{-1}...\delta_{n}^{-1}}+\mathbb{E}\|Z^{M,N}_{n}\|^{2b\delta_1^{-1}...\delta_{n}^{-1}}\right]\le 1.
\end{align*}
So that, through inequality (\ref{2.16}), we obtain
\begin{align*}
&e^{-\lambda n^{\gamma}}\left[\mathbb{E}\|Y_n^{M,N}\|^{2b\delta_1^{-1}...\delta_{n}^{-1}}+\mathbb{E}\|Z_n^{M,N}\|^{2b\delta_1^{-1}...\delta_{n}^{-1}}\right]\notag\\
\le&\frac{1}{2}e^{-\lambda (n+1)^\gamma}\left[\mathbb{E}\|Y^{M,N}_{n+1}\|^{2b\delta_1^{-1}...\delta_{n+1}^{-1}}+\mathbb{E}\|Z^{M,N}_{n+1}\|^{2b\delta_1^{-1}...\delta_{n+1}^{-1}}\right]\notag\\
\le &\frac{1}{2^{M-n}}e^{-\lambda M^\gamma}\left[\mathbb{E}\|Y^{M,N}_{M}\|^{2b\delta_1^{-1}...\delta_{M}^{-1}}+\mathbb{E}\|Z^{M,N}_{M}\|^{2b\delta_1^{-1}...\delta_{M}^{-1}}\right],
\end{align*}
for $n\in[\tilde{N_0},M-1]$. It follows
\begin{align}\label{2.17}
I_2\le&\sum_{n=\tilde{N_0}}^{M-1}\frac{1}{2^{M-n}}e^{-\lambda M^\gamma}\left[\mathbb{E}\|Y^{M,N}_{M}\|^{2b\delta_1^{-1}...\delta_{M}^{-1}}+\mathbb{E}\|Z^{M,N}_{M}\|^{2b\delta_1^{-1}...\delta_{M}^{-1}}\right]\notag\\
\le&e^{-\lambda M^\gamma}\left[\mathbb{E}\|Y^{M,N}_{M}\|^{2b\delta_1^{-1}...\delta_{M}^{-1}}+\mathbb{E}\|Z^{M,N}_{M}\|^{2b\delta_1^{-1}...\delta_{M}^{-1}}\right]\notag\\
\to& 0,\quad M\to+\infty.
\end{align}

Then we deal with $I_1$. Define
\begin{align*}
C_1=1+\sup_{n\in[0,\tilde{N_0}-1]}Ce^{-\lambda(2b-1) \left((n+1)^\gamma-n^{\gamma}\right)}\cdot (n+3)^\theta.
\end{align*}
Then trough inequality (\ref{2.16}), we have
\begin{align*}
&e^{-\lambda n^{\gamma}}\left[\mathbb{E}\|Y_n^{M,N}\|^{2b\delta_1^{-1}...\delta_{n}^{-1}}+\mathbb{E}\|Z_n^{M,N}\|^{2b\delta_1^{-1}...\delta_{n}^{-1}}\right]\\\le& C_1^{\tilde{N_0}-n}e^{-\lambda \tilde{N_0}^\gamma}\left[\mathbb{E}\|Y^{M,N}_{\tilde{N_0}}\|^{2b\delta_1^{-1}...\delta_{\tilde{N_0}}^{-1}}+\mathbb{E}\|Z^{M,N}_{\tilde{N_0}}\|^{2b\delta_1^{-1}...\delta_{\tilde{N_0}}^{-1}}\right].
\end{align*}
Taking summation from $0$ to $\tilde{N_0}-1$, we have
\begin{align}\label{2.18}
I_1\le&\sum_{n=0}^{\tilde{N_0}-1}C_1^{\tilde{N_0}-n}e^{-\lambda \tilde{N_0}^\gamma}\left[\mathbb{E}\|Y^{M,N}_{\tilde{N_0}}\|^{2b\delta_1^{-1}...\delta_{\tilde{N_0}}^{-1}}+\mathbb{E}\|Z^{M,N}_{\tilde{N_0}}\|^{2b\delta_1^{-1}...\delta_{\tilde{N_0}}^{-1}}\right]\notag\\
=&\frac{C_1^{\tilde{N_0}}-1}{C_1-1}e^{-\lambda \tilde{N_0}^\gamma}\left[\mathbb{E}\|Y^{M,N}_{\tilde{N_0}}\|^{2b\delta_1^{-1}...\delta_{\tilde{N_0}}^{-1}}+\mathbb{E}\|Z^{M,N}_{\tilde{N_0}}\|^{2b\delta_1^{-1}...\delta_{\tilde{N_0}}^{-1}}\right]\notag\\
\to&0, \quad M\to+\infty,
\end{align}
since inequality (\ref{2.17}) implies 
\begin{align*}
e^{-\lambda \tilde{N_0}^\gamma}\left[\mathbb{E}\|Y^{M,N}_{\tilde{N_0}}\|^{2b\delta_1^{-1}...\delta_{\tilde{N_0}}^{-1}}+\mathbb{E}\|Z^{M,N}_{\tilde{N_0}}\|^{2b\delta_1^{-1}...\delta_{\tilde{N_0}}^{-1}}\right]\to 0,\, M\to+\infty.
\end{align*}
Combining inequality (\ref{2.15}), (\ref{2.17}) and (\ref{2.18}), we conclude
\begin{align*}
\|Y^{M,N}_\cdot\|_{\hat{L}^{2b,\vec{\delta}^\theta,\lambda,\gamma}_\mathcal{F}}+\|Z^{M,N}_\cdot\|_{\hat{L}^{2b,\vec{\delta}^\theta,\lambda,\gamma}_\mathcal{F}}\to 0,\quad M,N\to+\infty.
\end{align*}
So that $(Y^N,Z^N)$ is a Cauchy sequence, and define
\begin{align*}
Y_n=\lim_{N\to+\infty}Y_n^N,\quad Z_n=\lim_{N\to+\infty}Z_n^N.
\end{align*}

\textbf{Uniqueness.} Assume that $(Y,Z)$ and $(Y',Z')$ are two solutions of BS$\Delta$Es (\ref{BSDE}). Denote $(\tilde{Y},\tilde{Z})=(Y-Y',Z-Z')$. Similar to the proof of existence, we have
\begin{align*}
&e^{-\lambda n^{\gamma}}\left[\mathbb{E}\|\tilde{Y}_n\|^{2b\delta_1^{-1}...\delta_{n}^{-1}}+\mathbb{E}\|\tilde{Z}_n\|^{2b\delta_1^{-1}...\delta_{n}^{-1}}\right]\\\le &\frac{1}{2}e^{-\lambda N^{\gamma}}\left[\mathbb{E}\|\tilde{Y}_N\|^{2b\delta_1^{-1}...\delta_{N}^{-1}}+\mathbb{E}\|\tilde{Z}_N\|^{2b\delta_1^{-1}...\delta_{N}^{-1}}\right],\quad  n\in[\tilde{N_0},N-1],
\end{align*}
and
\begin{align*}
&e^{-\lambda n^{\gamma}}\left[\mathbb{E}\|\tilde{Y}_n\|^{2b\delta_1^{-1}...\delta_{n}^{-1}}+\mathbb{E}\|\tilde{Z}_n\|^{2b\delta_1^{-1}...\delta_{n}^{-1}}\right]\\\le &C_1^{\tilde{N_0}}e^{-\lambda \tilde{N_0}^{\gamma}}\left[\mathbb{E}\|\tilde{Y}_{\tilde{N_0}}\|^{2b\delta_1^{-1}...\delta_{\tilde{N_0}}^{-1}}+\mathbb{E}\|\tilde{Z}_{\tilde{N_0}}\|^{2b\delta_1^{-1}...\delta_{\tilde{N_0}}^{-1}}\right],\quad  n\in[0,\tilde{N_0}-1].
\end{align*}
Let $N\to +\infty$, the above two inequality mean
\begin{align*}
\sup_{n\ge0}e^{-\lambda n^{\gamma}}\left[\mathbb{E}\|\tilde{Y}_n\|^{2b\delta_1^{-1}...\delta_{n}^{-1}}+\mathbb{E}\|\tilde{Z}_n\|^{2b\delta_1^{-1}...\delta_{n}^{-1}}\right]=0.
\end{align*}
The proof of Theorem \ref{BSDE e,u} completes.
\end{proof}

\section{A stochastic maximum principle}
In this section, we consider the following optimal control problem. The state equation is
\begin{align}\label{state}
\left\{\begin{array}{ll}
X_{n+1}=X_n+b(n,X_n,u_n)+\sigma(n,X_n,u_n)\xi_n^H,\, n\in\mathcal{T},
\\X_0=x,
\end{array}\right.
\end{align}
with the cost functional
\begin{align}\label{cost}
J(u)=Y_0,
\end{align}
$(Y,Z)$ is the solution to the following BS$\Delta$Es:
\begin{align}\label{costBSDE}
\left\{\begin{array}{ll}
e^{-\lambda n^\gamma}\left(Y_n+Z_n\eta_n\right)=e^{-\lambda (n+1)^\gamma}\left(Y_{n+1}+f(n+1,X_{n+1},Y_{n+1},Z_{n+1},u_{n+1})\right),
\\\\\lim_{N\to+\infty}\mathbb{E}[e^{-\lambda N^{\gamma}}\|Y_N\|^{2b\delta_1^{-1}...\delta_{N}^{-1}}]=0.
\end{array}\right.
\end{align}
Here $x\in L^{2a}(\mathcal{F}_0)$ is independent with $\{\xi_n\}$, 
$b(n,x,u)$, $\sigma(n,x,u) $  are differential measurable functions on $\mathcal{T}\times \mathbb{R}^d\times \mathbb{R}^k$ with values in $\mathbb{R}^d$, and $f(n,x,y,z,u)$ is differential measurable functions on $\mathcal{T}\times \mathbb{R}^d\times\mathbb{R}\times\mathbb{R}\times \mathbb{R}^k$ with values in $\mathbb{R}$. Moreover, the cost functional (\ref{cost}) is equivalent to
\begin{align*}
J(u)=\mathbb{E}\sum_{n=1}^{+\infty}e^{-\lambda n^{\gamma}}f(n,X_n,Y_n,Z_n,u_n).
\end{align*}

Denote by $\mathbb{U}$ the set of progressively measurable process
\textbf{u}$=(u_n)_{0\le n\le N-1}$ taking values in a given closed-convex set $\textbf{U}\subset \mathbb{R}^k$ and satisfying $$\sum_{n=0}^{+\infty}e^{-\lambda n^\gamma}\mathbb{E}\left(\|u_n\|^{2a \delta_1...\delta_n}\right)<+\infty.$$ The problem is to find an optimal control $u^*\in\mathbb{U}$ to minimized the cost functional, i.e.,
\begin{align*}
J(u^*)=\inf_{u\in\mathbb{U}}J(u).
\end{align*}

To simplify the notation without losing the generality, we assume that $d=k=1$. We give the following assumptions:

\begin{enumerate}
	\item[(H3.1)]  $b$, $\sigma$ and $f$ are adapted processes: $b(\cdot,x,u), \sigma(\cdot,x,u)\in L^{2a,\vec{\delta}^\theta,\lambda,\gamma}_{\mathcal{F}}(0,+\infty)$, $f(\cdot,x,y,z,u)\in \hat{L}^{2b,\vec{\delta}^\theta,\lambda,\gamma}_{\mathcal{F}}(0,+\infty)$\, $\forall x,u\in\mathbb{R}$, for some $\lambda>0, \theta, \gamma>1$, such that $a|\vec{\delta}^\theta|^2\ge b\ge1$.

\item[(H3.2)] There exists some constants $L>0$, such that 
\begin{align*}
\|b(n,x_1,u_1)-b(n,x_2,u_2)\|&+\|\sigma(n,x_1,u_1)-\sigma(n,x_2,u_2)\|\le L\left(\|x_1-x_2\|+\|u_1-u_2\|\right),\\
&\forall n\in\mathcal{T},\quad x_1,x_2,u_1,u_2\in\mathbb{R},
\end{align*}
and
\begin{align*}
&\|f_x(n,x_1,y_1,z_1,u_1)-f_x(n,x_2,y_2,z_2,u_2)\|\\ 
\le& L\left(\|x_1-x_2\|+\|y_1-y_2\|+\|z_1-z_2\|+\|u_1-u_2\|\right),
\end{align*}
for all $n\in\mathcal{T},\quad x_i,y_i,z_i,u_i\in\mathbb{R}, i=1,2.$

\item[(H3.3)] There exists a random function $f_1(n,x,y,u)$, such that $f(\cdot,x,y,u)\in\hat{L}^{2b,\vec{\delta}^\theta,\lambda,\gamma}_{\mathcal{F}}(0,+\infty)$ and
\begin{align*}
\lim_{N\to+\infty}\mathbb{E}\left[e^{-\lambda N^{\gamma}}\|f(N,x,y,z,u)-f(N,x,y,u)\|^{2b\delta_1^{-1}...\delta_{N}^{-1}}\right]=0,
\end{align*}
for all $x,y,u,z\in\mathbb{R}$. Moreover, $f$ is differentialble and uniformly Lipschitz continuous w.r.t $(x,y,u)$ for all $n\in\mathcal{T}$.
\end{enumerate}

\begin{remark}
For a given $u\in\mathbb{U}$, under the assumptions (H3.1)-(H3.3) and combining with Theorem \ref{theSDE} and Theorem \ref{BSDE e,u}, it is easy to prove the existence and uniqueness of the solutions to S$\Delta$Es (\ref{state}) and BS$\Delta$Es (\ref{costBSDE}).
\end{remark}

Let $u^*$ be the optimal control and $X^*, Y^*,Z^*$ be the corresponding processes. For all $\varepsilon\in(0,1)$ and $\tilde{u}\in\mathbb{U}$, let
\begin{align*}
u^\varepsilon_n=(1-\varepsilon) u^*_n+\varepsilon\tilde{u}_n:=u^*_n+\varepsilon v_n,
\end{align*}
and $X^\varepsilon,Y^\varepsilon,Z^\varepsilon$ be the corresponding processes.
For notations simplify, set
\begin{align*}
l^*(n)=l(n,X^*_n,u^*_n),\quad h^*(n)=h(n,X^*_n,Y^*_n,Z^*_n,u^*_n),
\end{align*}
for $l=b,\sigma,b_x,\sigma_x,b_u,\sigma_u$ and $h=f,f_x,f_y,f_z,f_u$.

Define the variation equations by
\begin{align}\label{varSDE}
\left\{\begin{array}{ll}
\hat{X}_{n+1}=\hat{X}_n+b_x^*(n)\hat{X}_n+b_u^*(n)v_n+\left(\sigma_x^*(n)\hat{X}_n+\sigma_u^*(n)v_n\right)\xi_n^H,\, n\in\mathcal{T},
\\\hat{X}_0=0,
\end{array}\right.
\end{align}
and 
\begin{align}\label{varBSDE}
\left\{\begin{array}{ll}
e^{-\lambda n^\gamma}\left(\hat{Y}_n+\hat{Z}_n\eta_n\right)=e^{-\lambda (n+1)^\gamma}\left(\hat{Y}_{n+1}+f_x^*(n+1)\hat{X}_{n+1}+f_y^*(n+1)\hat{Y}_{n+1}\right),\\\\
\qquad\qquad\qquad\qquad\qquad+e^{-\lambda (n+1)^\gamma}\left(f_z^*(n+1)\hat{Z}_{n+1}+f_u^*(n+1)v_{n+1}\right)
\\\\\lim_{N\to+\infty}\mathbb{E}\left[e^{-\lambda N^{\gamma}}\hat{Y}_N^{2bp_1^{-1}...p_{N}^{-1}}\right]=0.
\end{array}\right.
\end{align}
Then we have following convergence results:
\begin{lemma}
Let $(\hat{X},\hat{Y},\hat{Z})$ defined as equations (\ref{varSDE}) and (\ref{varBSDE}), $(X^*,Y^*,Z^*)$ and $(X^\varepsilon,Y^\varepsilon,Z^\varepsilon)$ be the processes corresponding to $u^*$ and $u^\varepsilon$, respectively. Then we have 
\begin{align}
&\sum_{n=0}^{+\infty}e^{-\lambda n^\gamma}\mathbb{E}|X^\varepsilon_n-X^*_n|^2\le O(\varepsilon^2),\label{3.6}\\
&\sum_{n=0}^{+\infty}e^{-\lambda n^\gamma}\mathbb{E}\left(|Y^\varepsilon_n-Y^*_n|^2+|Z^\varepsilon_n-Z^*_n|^2\right)\le O(\varepsilon^2),\label{3.7}\\
&\lim_{\varepsilon\to0}\sum_{n=0}^{+\infty}e^{-\lambda n^\gamma}\mathbb{E}\left|\frac{X^\varepsilon_n-X^*_n}{\varepsilon}-\hat{X}_n\right|^2=0,\label{3.8}\\
&\lim_{\varepsilon\to0}\sum_{n=0}^{+\infty}e^{-\lambda n^\gamma}\mathbb{E}\left(\left|\frac{Y^\varepsilon_n-Y^*_n}{\varepsilon}-\hat{Y}_n\right|^2+\left|\frac{Z^\varepsilon_n-Z^*_n}{\varepsilon}-\hat{Z}_n\right|^2\right)=0.\label{3.9}
\end{align}
\end{lemma}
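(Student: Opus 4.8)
The plan is to treat all four estimates by the same device used in Theorems \ref{theSDE} and \ref{BSDE e,u}: run the forward recursion of Theorem \ref{theSDE} and the backward Cauchy estimate of Theorem \ref{BSDE e,u} on the relevant \emph{difference} and \emph{remainder} processes, and then descend from the weighted $L^{2a,\vec\delta^\theta,\lambda,\gamma}$ and $\hat L^{2b,\vec\delta^\theta,\lambda,\gamma}$ norms to the plain $L^2_{\lambda,\gamma}$ norm. First I would prove (\ref{3.6}). Set $\bar X_n = X^\varepsilon_n - X^*_n$; by (H3.2) its increments are Lipschitz in $(\bar X_n,\varepsilon v_n)$, so the rescaled process $W_n=\bar X_n/\varepsilon$ (with $W_0=0$) satisfies an S$\Delta$Es-type inequality whose only forcing is $v_n$, which is \emph{independent of} $\varepsilon$ and lies in $L^{2a,\vec\delta^\theta,\lambda,\gamma}$ because $v\in\mathbb U$. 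The a priori bound behind (\ref{2.3})--(\ref{2.5}) then gives $\|W\|_{2a,\vec\delta^\theta,\lambda,\gamma}\le C$ uniformly in $\varepsilon$. The crucial point is to factor out $\varepsilon$ \emph{before} descending to $L^2$: the per-term bound $\mathbb E|W_n|^{2a\delta_1\cdots\delta_n}\le Ce^{\lambda n^\gamma}$ together with Jensen's inequality (legitimate since $2a\delta_1\cdots\delta_n\ge 2a|\vec\delta^\theta|\ge 2$) yields $\mathbb E|\bar X_n|^2=\varepsilon^2\mathbb E|W_n|^2\le \varepsilon^2(Ce^{\lambda n^\gamma})^{1/(a\delta_1\cdots\delta_n)}$, and summing against $e^{-\lambda n^\gamma}$ leaves $\varepsilon^2\sum_n e^{-\lambda n^\gamma(1-\rho_n)}$ with $\rho_n=1/(a\delta_1\cdots\delta_n)\le 1/(a|\vec\delta^\theta|)<1$, which converges because $\gamma>1$. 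This produces exactly the $O(\varepsilon^2)$ of (\ref{3.6}).

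Next I would establish (\ref{3.7}) by the same rescaling applied to $(\bar Y,\bar Z)=(Y^\varepsilon-Y^*,Z^\varepsilon-Z^*)$, now through the backward machinery of Theorem \ref{BSDE e,u}. Here the forcing of the rescaled differences is built from $f$-increments, which by (H3.2)--(H3.3) are Lipschitz in $(\bar Y,\bar Z,\bar X,\varepsilon v)$, and the forward bound (\ref{3.6}) controls the $\bar X$-contribution. The assumption $a|\vec\delta^\theta|^2\ge b\ge 1$ is exactly what makes the two scales compatible: since the partial products satisfy $\prod_{i\le n}\delta_i\ge|\vec\delta^\theta|$, one checks $b\le a(\prod_{i\le n}\delta_i)^2$, hence $2b\delta_1^{-1}\cdots\delta_n^{-1}\le 2a\delta_1\cdots\delta_n$, so the backward moment exponent is dominated by the forward one and the $\bar X$-forcing indeed sits in the required $\hat L^{2b,\vec\delta^\theta,\lambda,\gamma}$ space. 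Running the three-block Cauchy estimate ($I_1,I_2,I_3$) of Theorem \ref{BSDE e,u} on the rescaled pair and descending as above gives (\ref{3.7}).

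For (\ref{3.8}) and (\ref{3.9}) I would introduce the remainder $R^\varepsilon_n=(X^\varepsilon_n-X^*_n)/\varepsilon-\hat X_n$ and its backward analogue. Using the fundamental theorem of calculus, $b(n,X^\varepsilon_n,u^\varepsilon_n)-b(n,X^*_n,u^*_n)=\int_0^1[b_x^{\tau}\bar X_n+b_u^{\tau}\varepsilon v_n]\,d\tau$ with $b_x^{\tau},b_u^{\tau}$ evaluated along the segment joining the starred and $\varepsilon$-processes; after dividing by $\varepsilon$ and subtracting the variation equation (\ref{varSDE}), the forcing of $R^\varepsilon$ splits into a part proportional to $R^\varepsilon$ itself (absorbed by the recursion) and a genuinely small part $\int_0^1(b_x^{\tau}-b_x^*)\,d\tau\,\hat X_n+\int_0^1(b_u^{\tau}-b_u^*)\,d\tau\,v_n$. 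By continuity of the derivatives, the bound (\ref{3.6}), and dominated convergence, this small part tends to $0$ in $L^{2a,\vec\delta^\theta,\lambda,\gamma}$ as $\varepsilon\to 0$, so the a priori estimate forces $R^\varepsilon\to 0$ and yields (\ref{3.8}); the backward remainder and (\ref{3.9}) are handled identically via Theorem \ref{BSDE e,u}, with (\ref{3.7}) and (\ref{3.8}) controlling the $\hat X$- and forward-remainder contributions.

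The main obstacle, and the only place the fractional structure really bites, is the non-vanishing cross terms $\mathbb E[\bar X_n(\cdots)\xi_n^H]\neq 0$ caused by $\xi_n^H$ not being independent of $\mathcal F_n$. This is precisely why one cannot estimate in plain $L^2$ directly (the usual martingale orthogonality is lost, and Hölder against $\xi_n^H$ forces higher moments), and why every step must be run in the $\delta$-power weighted norms before descending. The delicate bookkeeping is to keep the $\varepsilon^2$ scaling intact through that descent — achieved by the rescaling $W=\bar X/\varepsilon$ above — and to verify that the deterministic $n$-dependent constants produced by the Stirling-type moment bounds for $\xi_n^H$ (as in the estimate of $C_s$) stay summable against $e^{-\lambda n^\gamma}$ with $\gamma>1$; this last point is exactly where the discount factor $e^{-\lambda n^\gamma}$, rather than $e^{-\lambda n}$, is essential.
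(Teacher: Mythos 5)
Your proposal is correct and follows essentially the same route as the paper: rescaled difference/remainder processes, integral-form Taylor expansions of the coefficients, the forward recursion run in the weighted $2a\delta_1\cdots\delta_n$-moment norms with an $N_0$-splitting into finitely many initial terms plus a geometrically contracting tail, and descent to $L^2$ by Jensen using $a|\vec{\delta}^\theta|>1$. The only deviation is cosmetic: for the backward estimates (\ref{3.7}) and (\ref{3.9}) the paper works directly in plain $L^2$ weighted by $e^{-\lambda n^\gamma}$ — observing that none of the equations for $(\hat{Y},\hat{Z})$, $(Y^*,Z^*)$, $(Y^\varepsilon,Z^\varepsilon)$ contains fractional noise, so no higher-moment bookkeeping is needed there — whereas you run the full $\hat{L}^{2b,\vec{\delta}^\theta,\lambda,\gamma}$ machinery of Theorem \ref{BSDE e,u}, which is heavier but also valid thanks to your compatibility check $2b\delta_1^{-1}\cdots\delta_n^{-1}\le 2a\delta_1\cdots\delta_n$ derived from $a|\vec{\delta}^\theta|^2\ge b$.
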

\begin{proof}
We only proof equations (\ref{3.8}), (\ref{3.9}), which are more complex. For equation (\ref{3.8}), denote $\tilde{X}^\varepsilon_n=\frac{X^\varepsilon_n-X^*_n}{\varepsilon}-\hat{X}_n$, and 
\begin{align*}
\tilde{\phi}^\varepsilon(n)=\int_0^1 \phi(n,X^*_n+\theta(X^\varepsilon_n-X^*_n),u^*_n+\theta\varepsilon v_n)d\theta,
\end{align*}
for $\phi=b_x,b_u,\sigma_x,\sigma_u,f_x,f_y,f_z,f_u$. So that
\begin{align*}
\tilde{X}^\varepsilon_{n+1}=&\tilde{X}^\varepsilon_{n}+\frac{b^\varepsilon(n)-b^*(n)}{\varepsilon}-b_x^*(n)V_n-b_u^*(n)v_n\\
&+\left[\frac{\sigma^\varepsilon(n)-\sigma^*(n)}{\varepsilon}-\sigma_x^*(n)V_n-\sigma_u^*(n)v_n\right]\xi_n^H\\
=&[1+b_x^*(n)]\tilde{X}^\varepsilon_n+\frac{\tilde{b}_x^\varepsilon(n)-b^*_x(n)}{\varepsilon}[X^\varepsilon_n-X^*_n]+[\tilde{b}_u^\varepsilon(n)-b^*_u(n)]v_n\\
&+\left[\sigma_x^*(n)\tilde{X}^\varepsilon_n+\frac{\tilde{\sigma}_x^\varepsilon(n)-\sigma^*_x(n)}{\varepsilon}[X^\varepsilon_n-X^*_n]+[\tilde{\sigma}_u^\varepsilon(n)-\sigma^*_u(n)]v_n\right]\xi^H_n.
\end{align*}
Then
\begin{align*}
&\mathbb{E}\left|\tilde{X}^\varepsilon_{n+1}\right|^{2a\delta_1...\delta_{n+1}}\\
\le& C\mathbb{E}\left|\tilde{X}^\varepsilon_{n}\right|^{2a\delta_1...\delta_{n+1}}+C\mathbb{E}\|\tilde{b}_u^\varepsilon(n)-b^*_u(n)\|^{2a\delta_1...\delta_{n+1}}|v_n|^{2a\delta_1...\delta_{n+1}}\\
&+C\mathbb{E}\|\tilde{b}_x^\varepsilon(n)-b^*_x(n)\|^{2a\delta_1...\delta_{n+1}}\left|\varepsilon^{-1}(X^\varepsilon_n-X^*_n)\right|^{2a\delta_1...\delta_{n+1}}\\
&+C\mathbb{E}\Bigg[\left|\tilde{X}^\varepsilon_{n}\right|^{2a\delta_1...\delta_{n+1}}+\|\tilde{\sigma}_u^\varepsilon(n)-\sigma^*_u(n)\|^{2a\delta_1...\delta_{n+1}}|v_n|^{2a\delta_1...\delta_{n+1}}\\
&\qquad+\|\tilde{\sigma}_x^\varepsilon(n)-\sigma^*_x(n)\|^{2a\delta_1...\delta_{n+1}}\left|\varepsilon^{-1}(X^\varepsilon_n-X^*_n)\right|^{2a\delta_1...\delta_{n+1}}\Bigg]\left|\xi_n^H\right|^{2a\delta_1...\delta_{n+1}}\\
\le&C\left[\left(\mathbb{E}\left|\tilde{X}^\varepsilon_{n}\right|^{2a\delta_1...\delta_n}\right)^{\delta_{n+1}}+o(1)\right]\times\left[1+\left(\mathbb{E}|\xi_n^H|^{\frac{2a\delta_1...\delta_{n+1}}{1-\delta_{n+1}}}\right)^{1-\delta_{n+1}}\right]\\
\le &C(n+3)^\theta\left[\left(\mathbb{E}\left|\tilde{X}^\varepsilon_{n}\right|^{2a\delta_1...p_n}\right)^{\delta_{n+1}}+o(1)\right],\quad \varepsilon\to 0.
\end{align*}
Choose a fixed constant $N_0>1$, such that 
\begin{align*}
\mathbb{E}\left|e^{-\frac{\lambda}{2}(n+1)^\gamma}\tilde{X}^\varepsilon_{n+1}\right|^{2a\delta_1...\delta_{n+1}}\le\frac{1}{2}\left[\left(\mathbb{E}\left|e^{-\frac{\lambda}{2}n^\gamma}\tilde{X}^\varepsilon_{n}\right|^{2a\delta_1...\delta_n}\right)^{\delta_{n+1}}+o(1)\right],
\end{align*}
for $n\ge N_0$. Combining with $\mathbb{E}|\tilde{X}^\varepsilon_0|^{2a}=0$, we have
\begin{align}\label{3.10}
e^{-\lambda n^\gamma}\mathbb{E}|\tilde{X}^\varepsilon_n|^{2}\le\left(\mathbb{E}|e^{-\frac{\lambda}{2}n^\gamma}\tilde{X}^\varepsilon_n|^{2a\delta_1...\delta_n}\right)^{a^{-1}\delta_1^{-1}...\delta_n^{-1}}\le o(1), \quad n\in[1,N_0],
\end{align}
and 
\begin{align*}
&\mathbb{E}\left|e^{-\frac{\lambda}{2}(n+1)^\gamma}\tilde{X}^\varepsilon_{n+1}\right|^{2a\delta_1...\delta_{n+1}}\\\le&\frac{1}{2}\left[\left(\mathbb{E}\left|e^{-\frac{\lambda}{2}n^\gamma}\tilde{X}^\varepsilon_{n}\right|^{2a\delta_1...\delta_n}\right)^{\delta_{n+1}}+o(1)\right]\\
\le&\frac{1}{2^{n+1-N_0}}\left[\left(\mathbb{E}\left|e^{-\frac{\lambda}{2}N_0^\gamma}\tilde{X}^\varepsilon_{N_0}\right|^{\delta_1...\delta_{N_0}}\right)^{\delta_{N_0+1}...\delta_{n+1}}+o(1)\right],\quad n\ge N_0,
\end{align*}
which means
\begin{align}\label{3.11}
e^{-\lambda n^\gamma}\mathbb{E}|\tilde{X}^\varepsilon_n|^{2}&\le\left(\mathbb{E}|e^{-\frac{\lambda}{2}n^\gamma}\tilde{X}^\varepsilon_n|^{2a\delta_1...\delta_n}\right)^{a^{-1}\delta_1^{-1}...\delta_n^{-1}}\notag\\
&\le\frac{1}{2^{a(n-N_0)}}\left[\mathbb{E}\left|e^{-\frac{\lambda}{2}N_0^\gamma}\tilde{X}^\varepsilon_{N_0}\right|^2+o(1)\right], \quad n\ge N_0+1, \, \varepsilon\to 0.
\end{align}
Through equations (\ref{3.10}) and (\ref{3.11}), we obtain equation (\ref{3.8}).

The proof of equation (\ref{3.9}) is easier, since none of the equations of $(\hat{Y},\hat{Z}), (Y^*,Z^*),(Y^\varepsilon,Z^\varepsilon)$ contain fractional noise. Denote $\tilde{Y}^\varepsilon_n=\frac{Y^\varepsilon_n-Y^*_n}{\varepsilon}-\hat{Y}_n$ and $\tilde{Z}^\varepsilon_n=\frac{Z^\varepsilon_n-Z^*_n}{\varepsilon}-\hat{Z}_n$. Then
\begin{align*}
&e^{-\lambda n^\gamma}(\tilde{Y}^\varepsilon_n+\tilde{Z}_n^\varepsilon\eta_n)\\
=&e^{-\lambda(n+1)^\gamma}\Big[\tilde{Y}^\varepsilon_{n+1}+f_x^*(n+1)\tilde{X}^\varepsilon_{n+1}+f_y^*(n+1)\tilde{Y}^\varepsilon_{n+1}+f_z^*(n+1)\tilde{Z}^\varepsilon_{n+1}\\
&\quad\qquad\qquad+\frac{\tilde{f}_x^\varepsilon(n+1)-f^*_x(n+1)}{\varepsilon}[X^\varepsilon_{n+1}-X^*_{n+1}]\\
&\quad\qquad\qquad+\frac{\tilde{f}_y^\varepsilon(n+1)-f^*_y(n+1)}{\varepsilon}[Y^\varepsilon_{n+1}-Y^*_{n+1}]\\
&\quad\qquad\qquad+\frac{\tilde{f}_z^\varepsilon(n+1)-f^*_z(n+1)}{\varepsilon}[Z^\varepsilon_{n+1}-Z^*_{n+1}]+[\tilde{f}_u^\varepsilon(n+1)-f^*_u(n+1)]v_{n+1}
\Big].
\end{align*}
Taking $e^{\lambda n^\gamma}\mathbb{E}[|\cdot|^2]$ on both sides and combining with equation (\ref{3.8}), choose a fixed $\tilde{N_0}>1$, such that for $n\ge \tilde{N_0}$,
\begin{align*}
&e^{-\lambda n^\gamma}\mathbb{E}\left(|\tilde{Y}_n^\varepsilon|^2+|\tilde{Z}_n^\varepsilon|^2\right)\\
\le&\frac{1}{2}e^{-\lambda (n+1)^\gamma}\mathbb{E}\left(|\tilde{Y}_{n+1}^\varepsilon|^2+|\tilde{Z}_{n+1}^\varepsilon|^2\right)\\
&+o(1)\frac{1}{\varepsilon^2}\times e^{-\lambda (n+1)^\gamma}\mathbb{E}\Big(
|X^\varepsilon_{n+1}-X^*_{n+1}|^2+|Y^\varepsilon_{n+1}-Y^*_{n+1}|^2+|Z^\varepsilon_{n+1}-Z^*_{n+1}|^2\Big).
\end{align*}
So we have
\begin{align}\label{3.12}
&\sum_{n=\tilde{N_0}}^{+\infty}e^{-\lambda n^\gamma}\mathbb{E}\left(|\tilde{Y}_n^\varepsilon|^2+|\tilde{Z}_n^\varepsilon|^2\right)\notag\\
\le &o(1)\times\frac{1}{\varepsilon^2}\sum_{n={\tilde{N_0}+1}}^{+\infty}e^{-\lambda n^\gamma}\mathbb{E}\Big(
|X^\varepsilon_n-X^*_n|^2+|Y^\varepsilon_n-Y^*_n|^2+|Z^\varepsilon_n-Z^*_n|^2\Big)\notag\\
\le& o(1),\quad \varepsilon\to 0.
\end{align}
Particularly, $e^{-\lambda \tilde{N_0}^\gamma}\mathbb{E}\left(|\tilde{Y}_{\tilde{N_0}}^\varepsilon|^2+|\tilde{Z}_{\tilde{N_0}}^\varepsilon|^2\right)=o(1)$, and it follows
\begin{align}\label{3.13}
\sum_{n=0}^{\tilde{N_0}-1}e^{-\lambda n^\gamma}\mathbb{E}\left(|\tilde{Y}_n^\varepsilon|^2+|\tilde{Z}_n^\varepsilon|^2\right)=o(1),\quad \varepsilon\to 0.
\end{align}
Equations (\ref{3.12}) and (\ref{3.13}) complete the proof of equation (\ref{3.9}).
\end{proof}

Then we give the stochastic maximum principle for control problem (\ref{state})-(\ref{costBSDE}), which is the main result of this section.
\begin{theorem}\label{SMP}
Let assumptions (H3.1)-(H3.3) hold, $u^*,X^*$ be the optimal control and the corresponding state process. Then the following inequality holds:
\begin{align*}
\left[b_u^*(n)p_n+\sigma_u^*(n)p_n\sum_{k=0}^{n-1}\gamma(n,k)\xi_k^H+\beta(n,n)\sigma_u^*(n)q_n-f_u^*(n)k_n\right]\cdot(u_n-u^*_n)\ge 0,
\end{align*}
a.s., for all $n\in\mathcal{T}, u\in\mathbb{U}$. Here  $\gamma(n,k)=\sum_{l=0}^{n-1}\beta(n,l)\alpha(l,k)$ and $\alpha(\cdot,\cdot), \beta(\cdot,\cdot)$ are determined functions given by Lemma 2 of \cite{han2025maximum}, and $(p,q,k)$ defined by
\begin{align*}
\left\{\begin{array}{ll}
k_{n+1}=k_n+f_y^*(n)k_n+f_z^*(n)k_n\eta_n,\quad n\ge 1,\\\\
k_1=-1,\quad k_0=0.
\end{array}\right.
\end{align*}
and
\begin{align}\label{BSDEpq}
\quad\left\{\begin{array}{ll}
e^{-\lambda n^{\gamma}}\left(p_n+q_n\eta_n\right)\\\\=e^{-\lambda (n+1)^\gamma}\Big(p_{n+1}+b_x^*(n+1)p_{n+1}+\beta(n+1,n+1)\sigma_x^*(n+1)q_{n+1}\\\\\qquad\qquad\qquad-f_x^*(n+1)k_{n+1}+\sigma_x^*(n+1)p_{n+1}\mathbb{E}^{\mathcal{F}_{_n+1}}[\xi_{n+1}^H]\Big),
\\\\ \lim_{N\to0}\mathbb{E}\left[e^{-\lambda n^\gamma}|p_N|^{2bp_1^{-1}...p_N^{-1}}\right]=0 .
\end{array}\right.
\end{align}
\end{theorem}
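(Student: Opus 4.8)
The plan is to use the classical convex-variation method for the stochastic maximum principle, with the whole computation recast in the infinite-horizon weighted spaces introduced in Section~2 and with the fractional cross terms absorbed by the kernels of Lemma~2 of \cite{han2025maximum}. Since $\mathbb{U}$ is convex and $u^*$ is optimal, the perturbation $u^\varepsilon=u^*+\varepsilon v$ with $v=\tilde u-u^*$ satisfies $J(u^\varepsilon)\ge J(u^*)$ for every $\varepsilon\in(0,1)$, so
\[
0\le \lim_{\varepsilon\to0^+}\frac{J(u^\varepsilon)-J(u^*)}{\varepsilon}=\hat Y_0,
\]
where the last identity is exactly the content of the convergence results (\ref{3.8})--(\ref{3.9}) of the preceding lemma: the difference quotients of $(X,Y,Z)$ converge to the variational triple $(\hat X,\hat Y,\hat Z)$ in the relevant weighted norms. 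The entire task thus reduces to rewriting $\hat Y_0$ as an explicit linear functional of $v$ through the adjoint data $(p,q,k)$.

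First I would dispose of the backward variational equation (\ref{varBSDE}) by pairing $(\hat Y,\hat Z)$ with the forward process $k$. Applying discrete summation by parts to $\{e^{-\lambda n^\gamma}k_n\hat Y_n\}$ and inserting both the dynamics of $k$ and those of $(\hat Y,\hat Z)$, the recursive coefficients $f_y^*$ and $f_z^*$ cancel against the drift and $\eta_n$-coefficient of $k$ (using $\mathbb{E}^{\mathcal{F}_n}[\eta_n]=0$ and the orthogonality built into $\eta_n$), leaving, up to the sign fixed by $k_1=-1$,
\[
\hat Y_0=-\,\mathbb{E}\sum_{n=1}^{+\infty}e^{-\lambda n^\gamma}k_n\bigl(f_x^*(n)\hat X_n+f_u^*(n)v_n\bigr).
\]
The boundary contribution at infinity must be shown to vanish; this uses the terminal condition in (\ref{varBSDE}) together with the integrability guaranteed by Theorem~\ref{BSDE e,u}, and is a point that needs care under the $\hat L^{2b,\vec{\delta}^\theta,\lambda,\gamma}$ norms.

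Next I would eliminate the surviving state term $\sum_n e^{-\lambda n^\gamma}k_n f_x^*(n)\hat X_n$ by pairing the forward variational equation (\ref{varSDE}) with the adjoint pair $(p,q)$ solving (\ref{BSDEpq}); summation by parts on $\{e^{-\lambda n^\gamma}p_n\hat X_n\}$ and substitution of both dynamics converts this sum into terms carrying $v_n$. The genuine departure from the white-noise theory, and the \emph{main obstacle}, arises here: because $\xi_n^H$ is correlated with its past, the cross terms $\mathbb{E}[\hat X_n\xi_n^H]$ and the analogous contributions of $\sigma_u^*(n)v_n\,\xi_n^H$ do not vanish and cannot be handled by a martingale identity. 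They must instead be resolved through the deterministic kernels $\alpha(\cdot,\cdot),\beta(\cdot,\cdot)$ of Lemma~2 of \cite{han2025maximum}, and it is precisely this step that produces the memory term $\sum_{k=0}^{n-1}\gamma(n,k)\xi_k^H$ (with $\gamma(n,k)=\sum_{l=0}^{n-1}\beta(n,l)\alpha(l,k)$) and the coefficient $\beta(n,n)$ multiplying $\sigma_u^*(n)q_n$. Combining the two pairings yields
\[
\hat Y_0=\mathbb{E}\sum_{n=0}^{+\infty}e^{-\lambda n^\gamma}\Bigl[b_u^*(n)p_n+\sigma_u^*(n)p_n\sum_{k=0}^{n-1}\gamma(n,k)\xi_k^H+\beta(n,n)\sigma_u^*(n)q_n-f_u^*(n)k_n\Bigr]v_n\ge0 .
\]

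Finally, since $v=\tilde u-u^*$ with $\tilde u\in\mathbb{U}$ arbitrary, a standard localization argument — testing against perturbations $v$ supported on a single index $n$ and on an arbitrary $\mathcal{F}_n$-measurable set — upgrades this averaged inequality to the pointwise almost-sure statement
\[
\Bigl[b_u^*(n)p_n+\sigma_u^*(n)p_n\sum_{k=0}^{n-1}\gamma(n,k)\xi_k^H+\beta(n,n)\sigma_u^*(n)q_n-f_u^*(n)k_n\Bigr](u_n-u^*_n)\ge0
\]
for all $n\in\mathcal{T}$ and all $u\in\mathbb{U}$, which is the assertion of the theorem. Beyond the fractional cross terms, the persistent technical burden is to make every summation-by-parts duality rigorous on the infinite horizon: one must verify that the adjoint series converge absolutely in the weighted spaces (invoking Theorems~\ref{theSDE} and \ref{BSDE e,u} for $(\hat X,\hat Y,\hat Z)$ and $(p,q)$), that Fubini-type interchanges of $\mathbb{E}$ and $\sum_n$ are licit, and that all boundary terms as $N\to+\infty$ vanish under the chosen discount $e^{-\lambda n^\gamma}$.
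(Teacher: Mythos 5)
Your proposal is correct and follows essentially the same route as the paper: the convex perturbation $u^\varepsilon=u^*+\varepsilon v$ together with the convergence lemma gives $\hat Y_0\ge 0$, the two summation-by-parts pairings of $k$ with $(\hat Y,\hat Z)$ and of $(p,q)$ with $\hat X$ produce the duality identity, the fractional cross terms are resolved exactly as in the paper via the identities $\mathbb{E}[M_n\eta_n\xi_n^H]=\beta(n,n)\mathbb{E}[M_n]$ and $\mathbb{E}[M_n\xi_n^H]=\mathbb{E}[M_n\sum_{k=0}^{n-1}\gamma(n,k)\xi_k^H]$ built from the kernels of Lemma~2 of \cite{han2025maximum}, and the localization with $\mathcal{F}_n$-measurable test sets yields the pointwise inequality. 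Your version is, if anything, slightly more careful than the paper on the boundary terms at infinity and on the placement of $\mathbb{E}$ and the discount $e^{-\lambda n^\gamma}$ in the final duality identity.
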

\begin{proof}
According to assumptions (H3.1)-(H3.3), it is easy to check BS$\Delta$Es (\ref{BSDEpq}) satisfies (H2.3)-(H2.5), so that there exists a unique solution of BS$\Delta$Es (\ref{BSDEpq}).
Consider
\begin{align}\label{adj1}
&\Delta(e^{-{\lambda n^\gamma}}p_n\cdot\hat{X}_n)\notag\\=&e^{-{\lambda (n+1)^\gamma}}p_{n+1}\cdot\hat{X}_{n+1}-e^{-{\lambda n^\gamma}}p_n\cdot\hat{X}_n\notag\\
=&-e^{-{\lambda (n+1)^\gamma}}\hat{X}_{n+1}\big[b_x^*(n+1)p_{n+1}+b(n+1,n+1)\sigma_x^*(n+1)q_{n+1}\notag\\
&\qquad\qquad\qquad\qquad-f_x^*(n+1)k_{n+1}+\sigma_x^*(n+1)p_{n+1}\mathbb{E}^{\mathcal{F}_{n+1}}[\xi_{n+1}^H]\big]\notag\\
&+e^{-\lambda n^\gamma}\hat{X}_{n+1}q_n\eta_n+e^{-\lambda n^\gamma}p_n\left[b_x^*(n)\hat{X}_n+b_u^*(n)v_n\right]\notag\\
&+e^{-\lambda n^\gamma}p_n\left[\sigma_x^*(n)\hat{X}_n+\sigma_u^*(n)v_n\right]\xi_n^H\notag\\
=&-\left[e^{-\lambda (n+1)^\gamma}b_x^*(n+1)p_{n+1}\hat{X}_{n+1}-e^{-\lambda n^\gamma}b_x^*(n)p_n\hat{X}_n\right]\notag\\
&-\left[e^{-\lambda (n+1)^\gamma}\sigma_x^*(n+1)p_{n+1}\hat{X}_{n+1}\mathbb{E}^{\mathcal{F}_{_n+1}}[\xi_{n+1}^H]-e^{-\lambda n^\gamma}\sigma_x^*(n)p_n\hat{X}_n\xi_n^H\right]\notag\\
&-\left[e^{-\lambda (n+1)^\gamma}b(n+1,n+1)\sigma_x^*(n+1)q_{n+1}\hat{X}_{n+1}-e^{-\lambda n^\gamma}\sigma_x^*(n)q_n\hat{X}_n\eta_n\xi_n^H\right]\notag\\
&+e^{-\lambda n^\gamma}\left[V_n+b^*(n)V_n\right]q_n\eta_n+e^{-\lambda n^\gamma}\sigma_u^*(n)q_nv_n\eta_n\xi_n^H\notag\\
&+e^{-\lambda (n+1)^\gamma}f_x^*(n+1)k_{n+1}\hat{X}_{n+1}+e^{-\lambda n^\gamma}b_u^*(n)p_nv_n+e^{-\lambda n^\gamma}\sigma_u^*(n)p_nv_n\xi_{n}^H.
\end{align}
Notice that 
\begin{align*}
\lim_{N\to+\infty}e^{-\lambda N^\gamma}\mathbb{E}|l_N|^2=0,\quad l=p,q,\hat{X},
\end{align*}
and 
\begin{align*}
\mathbb{E}\left(M_n\eta_n\right)=&\mathbb{E}\left(M_n\mathbb{E}\left[\eta_n|\mathcal{F}_n\right]\right)=0,\\\\
\mathbb{E}\left[M_n\eta_n\xi_n^H\right]=&\mathbb{E}\left[M_n\mathbb{E}\left(\eta_n\xi_n^H|\mathcal{F}_n\right)\right]\notag\\
=&\mathbb{E}\left[M_n\mathbb{E}\left(\eta_n\sum_{k=0}^n\beta(n,k)\eta_k|\mathcal{F}_n\right)\right]\notag\\
=&\mathbb{E}\left[M_n\sum_{k=0}^{n-1}\beta(n,k)\eta_k\mathbb{E}\left(\eta_n|\mathcal{F}_n\right)\right]\notag\\
&+\mathbb{E}\left[M_n\mathbb{E}\left(\beta(n,n)\eta_n^2|\mathcal{F}_n\right)\right]\notag\\
=&\beta(n,n)\mathbb{E}\left[M_n\right],\\\\
\mathbb{E}\left[M_n\xi_n^H\right]=&\mathbb{E}\left[M_n\mathbb{E}\left(\sum_{l=0}^n\beta(n,l)\eta_l^H|\mathcal{F}_n\right)\right]\\
=&\mathbb{E}\left[M_n\mathbb{E}\left(\sum_{l=0}^{n-1}\sum_{k=0}^l\beta(n,l)\alpha(l,k)\xi_k^H|\mathcal{F}_n\right)\right]\\
&+\mathbb{E}\left[M_n\beta(n,n)\mathbb{E}\left(\eta_n|\mathcal{F}_n\right)\right]\\
:=&\mathbb{E}\left[M_n\sum_{k=0}^{n-1}\gamma(n,k)\xi_k^H\right],
\end{align*}
for all $\mathcal{F}_n$-measurable $M_n$. So that, by equation (\ref{adj1}) and combining with $\hat{X}_0=0$, we have
\begin{align}\label{3.16}
0=\sum_{n=0}^{+\infty}e^{-\lambda n^\gamma}\Big[&\beta(n,n)\sigma_u^*(n)q_nv_n+f_x^*(n)k_{n}\hat{X}_{n}\notag\\&+b_u^*(n)p_nv_n+\sigma_u^*(n)p_nv_n\sum_{k=0}^{n-1}\gamma(n,k)\xi_k^H\Big].
\end{align}
Similarly, it is easy to get
\begin{align}\label{3.17}
0=&\sum_{n=1}^{+\infty}\mathbb{E}\Delta(k_n\cdot e^{-\lambda n^\gamma}\hat{Y}_n)+e^{-\lambda}k_1\hat{Y}_1\notag\\
=&-\sum_{n=2}^{+\infty}\mathbb{E}\left[e^{-\lambda n^\gamma}k_n[f_x^*(n)\hat{X}_n+f_u^*(n)v_n)]\right]\\&+e^{-\lambda}\mathbb{E}\left[k_1\cdot[\hat{Y}_1+f_y^*(1)\hat{Y}_1+f_z^*(1)\hat{Z}_1]\right]\notag\\
=&-\sum_{n=0}^{+\infty}\mathbb{E}\left[e^{-\lambda n^\gamma}k_n[f_x^*(n)\hat{X}_n+f_u^*(n)v_n)]\right]\notag\\&+e^{-\lambda}\mathbb{E}\left[k_1\cdot[\hat{Y}_1+f_x^*(1)\hat{X}_1+f_y^*(1)\hat{Y}_1+f_z^*(1)\hat{Z}_1+f_u^*(1)v_1]\right],
\end{align}
since $k_0=0$. 
Combining with equation (\ref{3.16}), (\ref{3.17}), we obtain
\begin{align*}
&\sum_{n=0}^{+\infty}\left[b_u^*(n)p_n+\sigma_u^*(n)p_n\sum_{k=0}^{n-1}\gamma(n,k)\xi_k^H+\beta(n,n)\sigma_u^*(n)q_n-f_u^*(n)k_n\right]\cdot v_n\\
=&e^{-\lambda}\mathbb{E}\left[k_1\cdot[\hat{Y}_1+f_x^*(1)\hat{X}_1+f_y^*(1)\hat{Y}_1+f_z^*(1)\hat{Z}_1+f_u^*(1)v_1]\right]\\
=&\mathbb{E}[\hat{Y}_0+\hat{Z}_0\eta_0]\\
=&\hat{Y}_0.
\end{align*}
Since $u^*$ is the optimal control, we have 
\begin{align*}
\frac{d}{d\varepsilon}J(u^*+\varepsilon v)\Big|_{\varepsilon=0}=\hat{Y}_0\ge 0.
\end{align*}

By the arbitrary of $v$, we have 
\begin{align*}
\mathbb{E}\left\{\mathbf{1}_\mathcal{A}\left[b_u^*(n)p_n+\sigma_u^*(n)p_n\sum_{k=0}^{n-1}\gamma(n,k)\xi_k^H+\beta(n,n)\sigma_u^*(n)q_n-f_u^*(n)k_n\right]\cdot v_n\right\}\ge 0,
\end{align*}
for all $n\in\mathcal{T},\,\mathcal{A}\in\mathcal{F}_n$, which implies 
\begin{align}\label{nec}
\left[b_u^*(n)p_n+\sigma_u^*(n)p_n\sum_{k=0}^{n-1}\gamma(n,k)\xi_k^H+\beta(n,n)\sigma_u^*(n)q_n-f_u^*(n)k_n\right]\cdot(u_n-u^*_n)\ge 0,
\end{align}
a.s. for all $n\in\mathcal{T}$, which completes the proof of Theorem \ref{SMP}.
\end{proof}

\begin{remark}
Indeed, if we define the Hamiltonian function by
\begin{align*}
H(n,x,y,z,u,p,q,k)=&b(n,x,u)p+\sigma(n,x,u) p\mathbb{E}^{\mathcal{F}_n}[\xi_{n+1}]\\&+\beta(n,n)\sigma(n,x,u)q+f(n,x,y,z,u)k,
\end{align*}
where $\mathbb{E}^{\mathcal{F}_n}[\xi_{n+1}]=\sum_{k=0}^{n-1}\gamma(n,k)\xi_k^H$. Then we rewrite the adjoint equation (\ref{BSDEpq}) and the necessary condition for optimal control as 
\begin{align*}
\left\{\begin{array}{ll}
e^{-\lambda n^{\gamma}}\left(p_n+q_n\eta_n\right)=e^{-\lambda (n+1)^{\gamma}}\left(p_{n+1}+H_x^*(n+1)\right),
\\\\ \lim_{N\to0}\mathbb{E}\left[e^{-\lambda n^\gamma}|p_N|^{2bp_1^{-1}...p_N^{-1}}\right]=0,
\end{array}\right.
\end{align*}
and
\begin{align*}
H_u^*(n)\cdot(u_n-u^*_n)\ge0.
\end{align*}
Here
\begin{align*}
H^*(n)=H(n,X^*_n,Y^*_n,Z^*_n,u^*_n,p_n,q_n,k_n).
\end{align*}
\end{remark}
Moreover, if we add convex condition of $H$, we have the following verification theorem.
\begin{theorem}
Assume that $H(n,x,y,z,u,p,q,k)$ is convex w.r.t $(x,y,z,u)$ and condition (\ref{nec}) holds for all $u\in\mathbb{U}$. Then $u^*$ is the optimal control of problem (\ref{state})-(\ref{costBSDE}).
\end{theorem}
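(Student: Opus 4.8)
The plan is to prove the global optimality inequality $J(u)\ge J(u^*)$ for every admissible $u\in\mathbb{U}$ by combining a duality (summation-by-parts) identity with the convexity of $H$ and the necessary condition (\ref{nec}). Fix $u\in\mathbb{U}$ with corresponding state $X$ and cost processes $(Y,Z)$, and keep $u^*,X^*,(Y^*,Z^*)$ together with the adjoint triple $(p,q,k)$ from Theorem \ref{SMP}. Writing $J(u)-J(u^*)=Y_0-Y_0^*$, I would reduce everything to establishing the identity
$$Y_0-Y_0^*=\mathbb{E}\sum_{n=0}^{+\infty}e^{-\lambda n^\gamma}\Big[\Delta H_n-H_x^*(n)(X_n-X_n^*)-H_y^*(n)(Y_n-Y_n^*)-H_z^*(n)(Z_n-Z_n^*)\Big],$$
where $\Delta H_n:=H(n,X_n,Y_n,Z_n,u_n,p_n,q_n,k_n)-H^*(n)$ is the full Hamiltonian increment evaluated at the common adjoint values $(p_n,q_n,k_n)$.

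To obtain this identity I would run exactly the duality computation of Theorem \ref{SMP}, but with the \emph{exact} difference processes $X_n-X_n^*$, $Y_n-Y_n^*$, $Z_n-Z_n^*$ and $u_n-u_n^*$ replacing the linearized variations $\hat{X}_n,\hat{Y}_n,\hat{Z}_n,v_n$, and with the genuine nonlinear increments of $b,\sigma,f$ replacing their first-order expansions. Concretely, I would apply discrete summation by parts to $e^{-\lambda n^\gamma}p_n(X_n-X_n^*)$ using the forward equation for $X-X^*$ and the adjoint equation (\ref{BSDEpq}), and to $e^{-\lambda n^\gamma}k_n(Y_n-Y_n^*)$ using the forward $k$-equation and the backward equation for $Y-Y^*$; this is the nonlinear analogue of (\ref{adj1})--(\ref{3.17}). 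Taking expectations, the fractional-noise terms are disposed of by the same conditional-expectation identities $\mathbb{E}[M_n\eta_n]=0$, $\mathbb{E}[M_n\eta_n\xi_n^H]=\beta(n,n)\mathbb{E}[M_n]$ and $\mathbb{E}[M_n\xi_n^H]=\mathbb{E}[M_n\sum_{k=0}^{n-1}\gamma(n,k)\xi_k^H]$ used in the proof of Theorem \ref{SMP}, while the adjoint equations are arranged precisely so that the $H_x$-, $H_y$- and $H_z$-contributions telescope into the bracket above, the boundary terms at $N\to+\infty$ vanishing by the transversality conditions.

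With the identity in hand the conclusion is immediate: convexity of $H$ in $(x,y,z,u)$ (with $(p_n,q_n,k_n)$ frozen) gives the tangent-plane inequality
$$\Delta H_n-H_x^*(n)(X_n-X_n^*)-H_y^*(n)(Y_n-Y_n^*)-H_z^*(n)(Z_n-Z_n^*)\ge H_u^*(n)(u_n-u_n^*),$$
while the necessary condition (\ref{nec}), rewritten as $H_u^*(n)(u_n-u_n^*)\ge0$, makes the right-hand side nonnegative a.s.; summing against the nonnegative weights $e^{-\lambda n^\gamma}$ and taking expectation yields $J(u)-J(u^*)\ge0$. I expect the main obstacle to be the rigorous justification of the infinite-horizon summation by parts, namely proving that the boundary products $e^{-\lambda N^\gamma}\mathbb{E}[p_N(X_N-X_N^*)]$ and $e^{-\lambda N^\gamma}\mathbb{E}[k_N(Y_N-Y_N^*)]$ tend to $0$ and that all the rearranged series converge absolutely. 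This requires pairing the weighted integrability of the difference processes supplied by Theorem \ref{theSDE}, Theorem \ref{BSDE e,u} and the preceding Lemma against that of $(p,q,k)$ through H\"older's inequality with the weights $e^{-\lambda n^\gamma}$ and the exponents governed by $\vec{\delta}^\theta$, which is exactly where the condition $a|\vec{\delta}^\theta|^2\ge b\ge1$ in (H3.1) is needed.
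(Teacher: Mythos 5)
Your proposal is correct and is essentially the paper's own proof: the paper likewise applies discrete summation by parts to $e^{-\lambda n^\gamma}p_n\,\delta X_n$ and $k_n e^{-\lambda n^\gamma}\delta Y_n$ with the exact (nonlinear) difference processes $(\delta X,\delta Y,\delta Z,\delta u)$, removes the noise terms via the same conditional-expectation identities from Theorem \ref{SMP}, expresses $J(u)-J(u^*)=\delta Y_0$ through the Hamiltonian increment, and concludes from the tangent-plane inequality for the convex $H$ together with condition (\ref{nec}). The only cosmetic difference is that the paper keeps the $H_x^*$ contributions as a separate telescoping sum that cancels against $H_x^*(0)\delta X_0=0$ rather than folding them into a single identity as you do, and it leaves the boundary/convergence justifications you flag implicit.
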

\begin{proof}
Let $u$ be any admissible control and $(X,Y,Z), (X^*,Y^*,Z^*)$ be the corresponding processes to $u$, $u^*$, respectively. Denote $(\delta X,\delta Y,\delta Z,\delta u)=(X-X^*,Y-Y^*,Z-Z^*,u-u^*)$ and
\begin{align*}
&\delta l(n)=l(n,X_n,u_n)-l(n,X^*_n,u^*),\quad l=b,\sigma,\\
&\delta f(n)=f(n,X_n,Y_n,Z_n,u_n)-f(n,X^*_n,Y^*_n,Z^*_n,u^*_n).
\end{align*}
It is easy to get
\begin{align*}
0=&\sum_{n=0}^{+\infty}\mathbb{E}\Delta(e^{-\lambda n^\gamma}p_n\cdot\delta X_n)\notag\\
=&-\sum_{n=0}^{+\infty}\mathbb{E}\left[e^{-\lambda (n+1)^\gamma}\delta X_{n+1}H_x^*(n+1)\right]\notag\\
&+\sum_{n=0}^{+\infty}\mathbb{\mathbb{E}}\left[e^{-\lambda n^\gamma}[\delta b(n)p_n+\beta(n,n)\delta\sigma(n)q_n]\right],
\end{align*}
and
\begin{align*}
-e^{-\lambda}k_1\delta Y_1=&\sum_{n=1}^{+\infty}\mathbb{E}\Delta(k_n\cdot e^{-\lambda n^\gamma}\delta Y_n)\\
=&\sum_{n=0}^{+\infty}\mathbb{E}\left[e^{-\lambda n^\gamma}k_n[-\delta f(n)+f_z^*(n)\delta Z_n+f_y^*(n)\delta Y_n]\right]\\
&+e^{-\lambda}k_1\delta f(1).
\end{align*}
So that 
\begin{align*}
J(u)-J(u^*)=&\delta Y_0\\=&-k_1e^{-\lambda}\left(\delta Y_1+\delta f(1)\right)\\
=&-\sum_{n=0}^{+\infty}\mathbb{E}\left[e^{-\lambda (n+1)^\gamma}\delta X_{n+1}H_x^*(n+1)\right]\notag\\
&+\sum_{n=0}^{+\infty}\mathbb{E}\left[e^{-\lambda n^\gamma}[\delta H(n)-H_y^*(n)\delta Y_n-H^*_z(n)\delta Z_n]\right]\\
\ge&-\sum_{n=0}^{+\infty}\mathbb{E}\left[e^{-\lambda (n+1)^\gamma}\delta X_{n+1}H_x^*(n+1)\right]\notag\\
&+\sum_{n=0}^{+\infty}\mathbb{E}\left[e^{-\lambda n^\gamma}[H_x^*(n)\delta X_n+H_u^*(n)\delta u_n]\right]\\
=&H_x^*(0)\delta X_0+\sum_{n=0}^{+\infty}\mathbb{E}\left[e^{-\lambda n^\gamma}H_u^*(n)\delta u_n\right]\\
\ge&0,
\end{align*}
since $\delta X_0=0$. Here
\begin{align*}
\delta H(n)=H(n,X_n,Y_n,Z_n,u_n,p_n,q_n,k_n)-H^*(n).
\end{align*}

\end{proof}

\section{Application} In this section, we consider an optimal investment problem. Let $S_1$ be a risk-free security (e.g. a bond) and $S_2$ be a risky security (e.g. a stock) with following states:
\begin{align*}
\left\{\begin{array}{ll}
S_1(n+1)=\left(1+r_n\right)S_1(n),
\\S_1(0)=s_1,
\end{array}\right.
\end{align*}
and
\begin{align*}
\left\{\begin{array}{ll}
S_2(n+1)=\left(1+\mu_n\right)S_2(n)+\sigma_nS_2(n)\xi^H_n,
\\S_2(0)=s_2.
\end{array}\right.
\end{align*}
Here $r,\mu,\sigma>0$ are determined processes such that $r_n<\mu_n$.  An investor invests a fraction $v_n$ of the funds in risky security at each time $n$, with the remaining portion invested in risk-free security, and consumes a specific proportion $c_n\in(0,1)$ of the funds at some specific time $\mathbb{N}=\{n_1,n_2,...\}$. So the state process is defined as
\begin{align*}
\left\{\begin{array}{ll}
X_{n+1}=\left(1+r_n\right)\left(X_n-c_nX_n\mathcal{X}_{\{n\in\mathbb{N}\}}\right)+(\mu_n-r_n)v_n+\sigma_nv_n\xi^H_n,
\\X_0=x_0.
\end{array}\right.
\end{align*}
Assume that the utility derived by investors from delayed consumption decreases more rapidly as time increases. So we use $e^{-\lambda n^{\gamma}}$ to denote the utility at time $n$ discounted to the present time $0$. Then investor aim to maximize consumption at each time $n_i\in\mathbb{N}$, and minimize the negative utility caused by risky investment at the same time. So the cost function is defined as $J(v)=Y_0$, where $Y_0$ is the solution to 
\begin{align*}
\left\{\begin{array}{ll}
e^{-\lambda n^\gamma}\left(Y_n+Z_n\eta_n\right)=e^{-\lambda (n+1)^\gamma}\left((1+\frac{\lambda}{2})Y_{n+1}-Q_{n+1}X_{n+1}\mathcal{X}_{\{n+1\in\mathbb{N}\}}+R_{n+1}v_{n+1}^\beta\right),
\\\\\lim_{N\to+\infty}\mathbb{E}[e^{-\lambda N^{\gamma}}\|Y_N\|^{2b\delta_1^{-1}...\delta_{N}^{-1}}]=0.
\end{array}\right.
\end{align*}
Here $\beta>1$ and $Q_n,R_n>0$ are determined functions.

According to the obtained results, the adjoint equations are $k_0=0$, $k_n=-(1+\frac{\lambda}{2})^{n-1},\, n\ge 1$, and 
\begin{align}\label{4.1}
\quad\left\{\begin{array}{ll}
e^{-\lambda n^{\gamma}}\left(p_n+q_n\eta_n\right)\\\\=e^{-\lambda (n+1)^\gamma}\Big((1+r_n)\left(p_{n+1}-c_{n+1}p_{n+1}\mathcal{X}_{\{n+1\in\mathbb{N}\}}\right)+Q_{n+1}k_{n+1}\mathcal{X}_{\{n+1\in\mathbb{N}\}}\Big),
\\\\ \lim_{N\to0}\mathbb{E}\left[e^{-\lambda n^\gamma}|p_N|^{2bp_1^{-1}...p_N^{-1}}\right]=0 .
\end{array}\right.
\end{align}
For BS$\Delta$Es (\ref{4.1}), $(p,q)$ can be approximated by
\begin{align*}
\quad\left\{\begin{array}{ll}
e^{-\lambda n^{\gamma}}\left(p_n^N+q_n^N\eta_n\right)\\\\=e^{-\lambda (n+1)^\gamma}\Big((1+r_n)\left(p_{n+1}^N-c_{n+1}p_{n+1}^N\mathcal{X}_{\{n+1\in\mathbb{N}\}}\right)+Q_{n+1}k_{n+1}\mathcal{X}_{\{n+1\in\mathbb{N}\}}\Big),
\\\\ p_N^N=0,
\end{array}\right.
\end{align*}
as we show in Theorem \ref{BSDE e,u}, which directly implies $q_n=0,\,\forall n\in\mathcal{T}$.
Finally, the optimal control is given by
\begin{align*}
v^*_n=\left(0\vee\frac{(\mu_n-r_n)p_n+\sigma_np_n\sum_{k=0}^{n-1}\gamma(n,k)\xi_k^H}{\beta k_nR_n}\right)^{\frac{1}{\beta-1}}\wedge X^*_n(1-c_n\mathcal{X}_{\{n\in\mathbb{N}\}}).
\end{align*}
Set $\mu_n=0.15, r_n=0.05, \sigma_n=0.2, \lambda=1, \beta=2, c_n=0.5, Q=1,R=0.01, \mathbb{N}=\{10,20,...\}$. Take $H=0.75$ and $H=0.25$, respectively. The following are simulation results:
\begin{figure}[H]
    \centering
    \includegraphics[width=0.9\linewidth]{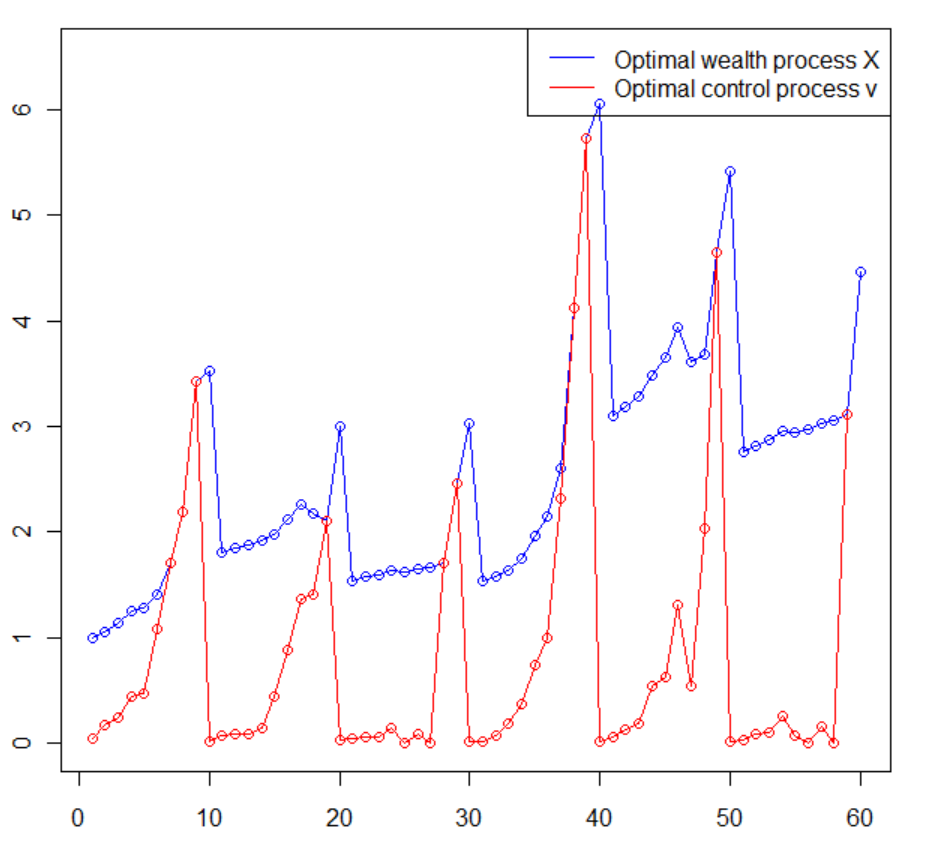}
    \caption{The case driven by fractional noise with $H=0.75$.}
    \label{fig:enter-label}
\end{figure}

\begin{figure}[H]
    \centering
    \includegraphics[width=0.9\linewidth]{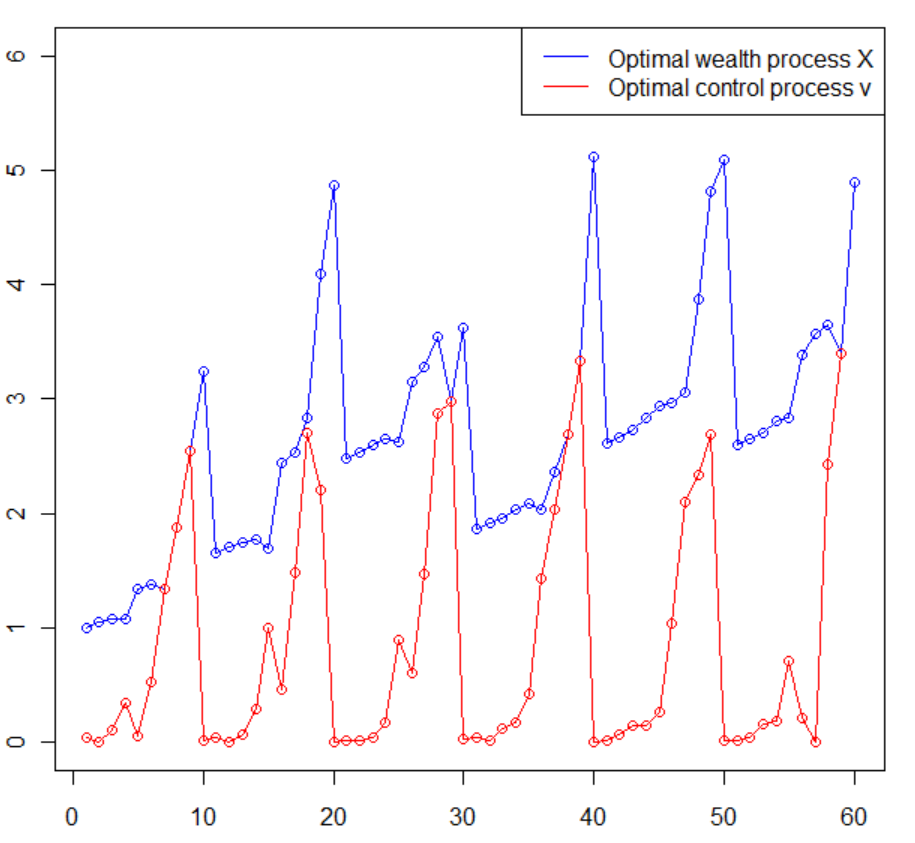}
    \caption{The case driven by fractional noise with $H=0.25$.}
    \label{fig:enter-label}
\end{figure}

\section{Acknowledgments}

This work was supported by National Key R$\&$D Program of China (Grant number 2023YFA1009200) and National Science Foundation of China (Grant numbers 12471417).

\FloatBarrier
\bibliography{main}
\end{document}